\newtheorem{theorem}{Theorem}[section]
\newtheorem{corollary}[theorem]{Corollary}
\newtheorem{lemma}[theorem]{Lemma}
\newtheorem*{theorem*}{Theorem}
\newtheorem*{definition*}{Definition}
\newtheorem{proposition}[theorem]{Proposition}
\theoremstyle{definition}
\newtheorem{remark}[theorem]{Remark}
\numberwithin{equation}{section}
\title[Comparison principles for nonlinear non-local integro-differential operators]{Comparison principles for a class of nonlinear non-local integro-differential operators on unbounded domains}
\author[N. M. Ladas]{Nikolaos Michael Ladas}
\thanks{The first author's research contribution was supported, during their PhD studies, by the EPSRC (Grant Reference: EP/R513167/1).}
\address{School of Mathematics \\ University of Birmingham \\ Edgbaston \\ Birmingham \\ B15 2TT \\ UK}
\email{nxl913@student.bham.ac.uk}
\author[J. C. Meyer]{John Christopher Meyer}
\address{School of Mathematics \\ University of Birmingham \\ Edgbaston \\ Birmingham \\ B15 2TT \\ UK}
\email{J.C.Meyer@bham.ac.uk}
\date{\today}
\begin{document}
\renewcommand{\subset}{\subseteq}
\renewcommand{\supset}{\supseteq}
\renewcommand{\abs}[1]{\left|#1\right|}

\newcommand{\Pcal}{\mathcal{P}}
\newcommand{\Ccal}{\mathcal{C}}
\newcommand{\dt}{\partial_t}
\newcommand{\dx}{\partial_x}

\newcommand{\dxx}{\partial_{xx}}
\newcommand{\boundary}{\partial {\Omega_T}}
\newcommand{\e}{\varepsilon}
\newcommand{\asxgoestoinfty}{\xrightarrow{x\to\infty}}

\newcommand{\dxi}{\partial_{x_i}}
\newcommand{\dxj}{\partial_{x_j}}
\newcommand{\dxixj}{\partial_{x_i x_j}}

\newcommand{\aij}{a_{ij}}
\newcommand{\bi}{b_i}

\renewcommand{\phi}{\varphi}
\renewcommand{\epsilon}{\varepsilon}

\newcommand{\suchthat}{\text{ such that }}

\newcommand{\cov}{Cov}
\allowdisplaybreaks


\newcommand{\imply}{\Rightarrow}
\newcommand{\bimply}{\Leftrightarrow}
\newcommand{\union}{\bigcup}
\newcommand{\intersect}{\bigcap}
\newcommand{\boolor}{\vee}
\newcommand{\booland}{\wedge}
\newcommand{\boolimply}{\imply}
\newcommand{\boolbimply}{\bimply}
\newcommand{\boolnot}{\neg}
\newcommand{\boolsat}{\!\models}
\newcommand{\boolnsat}{\!\not\models}
\newcommand{\aeeq}{\stackrel{\text{a.e}}{=}}
\newcommand{\s}[1]{\ensuremath{\mathcal #1}}

\newcommand{\diff}[1]{\mathrm{\frac{d}{d\mathit{#1}}}}
\newcommand{\diffII}[1]{\mathrm{\frac{d^2}{d\mathit{#1}^2}}}
\newcommand{\intg}[4]{\int_{#1}^{#2} #3 \, \mathrm{d}#4}
\newcommand{\intgd}[4]{\int\!\!\!\!\int_{#4} #1 \, \mathrm{d}#2 \, \mathrm{d}#3}
\newcommand{\limes}[2]{\displaystyle{\lim_{#1 \to #2}}}	
\newcommand{\summ}[2]{\sum\limits_{#1}^{#2}}
\newcommand{\RR}{\mathbb{R}}
\newcommand{\CC}{\mathbb{C}}
\newcommand{\QQ}{\mathbb{Q}}
\newcommand{\ZZ}{\mathbb{Z}}
\newcommand{\NN}{\mathbb{N}}

\begin{abstract}
We present extensions of the comparison and maximum principles available for nonlinear non-local integro-differential operators $P:\mathcal{C}^{2,1}(\Omega \times (0,T])\times L^\infty (\Omega \times (0,T])\to\mathbb{R}$, of the form $P[u] = L[u] -f(\cdot ,\cdot ,u,Ju)$ on $\Omega \times (0,T]$.
Here, we consider: 
unbounded spatial domains $\Omega \subset \mathbb{R}^n$, with $T>0$; 
sufficiently regular second order linear parabolic partial differential operators $L$; 
sufficiently regular semi-linear terms $f:(\Omega \times (0,T]) \times \mathbb{R}^2\to\mathbb{R}$; 
and the non-local term $Ju= \int_{{\Omega}}\phi(x-y)u(y,t)dy$, with $\phi$ in a class of non-negative sufficiently summable kernels.
We also provide examples illustrating the limitations and applicability of our results.
\end{abstract}

\maketitle

\newcommand{\on}{\quad\text{on }}

\subjclassname{: 35A23, 35B50, 35B51, 35K20}

\keywordsname{: Maximum Principles, Comparison Principles, Nonlinear Non-local Integro-differential Operator, Integro-differential Inequalities}

\section{Introduction}

Maximum (minimum) principles for solutions of partial differential inequalities establish that global maxima (minima) of solutions necessarily occur on a subset of the boundary of the domain. 
They are widely used to establish qualitative properties of solutions to boundary value problems for second order linear parabolic partial differential equations, for instance: uniqueness results; and a priori bounds on solutions and their derivatives, notably including positivity and non-negativity results. 
Comparison principles are readily obtained from maximum (minimum) principles and can be used to compare solutions of partial differential inequalities. 
In this paper, for unbounded spatial domains $\Omega\subset\RR^n$, we consider solutions $u:\Omega\times[0,T]\to\RR$ to 
\begin{equation*}
P[u] =  L[u]+f(\cdot,\cdot , u,Ju) \leq 0\footnote{For details, see Section \ref{section 2}, and conditions in the results in Sections \ref{Mainresults} and \ref{Section4}.} \quad \text{ on } \Omega \times (0,T],
\end{equation*} 
with: 
$L$ denoting a second order linear parabolic partial differential operator;
semi-linear term $f$, being a sufficiently regular function of the domain variables $(x,t)\in \Omega \times (0,T]$, $u$, and $Ju$; 
and $Ju$ is a non-local term given by the convolution product
\begin{equation*}
Ju(x,t)=\int_\Omega\phi(x-y)u(y,t)dy\quad\forall(x,t)\in\Omega\times(0,T] .
\end{equation*}
For brevity, we henceforth refer to operators of the form $P$, as nonlinear non-local integro-differential operators.

Nonlinear non-local integro-differential equations of the form $P[u]=0$ on $\Omega\times (0,T]$, arise from mathematical models of various phenomena. 
Examples include: 
SIR models, used in epidemiology, that include latency either temporal or due to the spatial movement of infectious individuals (see \cite{Guo2020} and \cite{Li2014}); 
biological population models that account for spatial and/or temporal delay for changes in the population, for instance, time and range to forage for resources, or, in-homogenous density of the population affecting reproduction rates (see \cite{Berestycki2016}, \cite{Britton1990}, \cite{Murray2004} and  \cite{Volpert2009}); 
and, interaction between neurons, in which, non-local terms account for the stimulation of other neurons nearby (see \cite{Coombes2014}).

Related results concerning the well-posedness of boundary value problems for parabolic partial differential equations can be found in  \cite{Friedman2008} and \cite{Ladyzenskaja1968} in the local setting ($f$ independent of the $Ju$ term), and \cite{Freitas1998}, \cite{Kavallaris}, \cite{Quittner2007} and \cite{Volpert2011a} in the non-local setting. 
Moreover, models that combine both local and non-local diffusion phenomena in different interfaces of the spatial domain, via homogenisation, are discussed in \cite{Garriz2020}. 
Maximum/minimum principles, and comparison principles have been extensively studied in the local setting (see \cite{ProtterWienberger} and \cite{Walter1970}).
 
The motivation for this study is to extend the minimum principle \cite[Theorem 5]{Ducrot2011} and comparison principle presented in \cite[Theorem 4]{Ducrot2011}.
Specifically, we: extend the condition on $Ju$ that $\phi$ has compact support on $\Omega$, to $\phi$ being sufficiently summable on $\Omega$; 
and, to extend these principles to include $\Omega\subseteq\RR^n$ with $n\in\NN$.
We establish our main results in Proposition \ref{positivenessresult} and Theorems \ref{nonlinear comparison thm} and \ref{Comparison Principle}. 
In particular, Theorem \ref{nonlinear comparison thm} shows that a comparison principle remains valid when the second moment of a non-negative $\phi$ is summable (without regularity assumptions on the coefficients of $L$) whilst Theorem \ref{Comparison Principle} demonstrates that, by assuming extra regularity in the coefficients of $L$, $\phi$ is merely required to be non-negative and summable on $\Omega$.
In addition, in  Remarks \ref{remark 3.7} and \ref{rmrk4.5} we highlight a key limitation as well as the applicability of the aforementioned results by considering boundary value problems for non-local FKPP equations. 

The structure of the paper is as follows. 
In Section \ref{section 2} we provide  notation that will be used throughout the paper.
In Section \ref{Mainresults}, under suitable growth conditions on the coefficients of $L$ on $\Omega\times(0,T]$, we first prove weak and strong minimum principles for solutions to the linear non-local integro-differential inequality $L[u]+cu+dJu \leq 0$ with $c$ bounded above and $d$ non-negative and bounded on $\Omega\times(0,T]$. 
Using this weak minimum principle, we prove a comparison principle for the nonlinear non-local integro-differential operator $P$. 
The methods here rely on the existence of suitable ``auxiliary'' functions (in the sense of \cite{Meyer2014}). 
In Section \ref{Section4} we establish a comparison principle for the nonlinear non-local integro-differential operator $P$ with $\phi$ summable and non-negative on the spatial domain $\Omega=\RR^n$. 
To prove this result, we impose regularity conditions on the coefficients of $L$ and utilise fundamental solutions (as presented in \cite[Chapter 1]{Friedman2008}). 
Succinctly, we use the existence of fundamental solutions for the adjoint of $L$ to convert a nonlinear non-local integro-differential inequality satisfied by the difference of subsolutions and supersolutions, into an integral inequality which satisfies the conditions of the Bellman-Gr\"onwall inequality. 
To conclude, in Section \ref{Conclusion}, we discuss extensions of our results which are readily established, and also, extensions which are potentially possible.

\section{Definitions and Notation}\label{section 2}
Let $\Omega$ be an unbounded domain of $\RR^n$. In relation to $\Omega$, for any $T\in(0,\infty)$, we denote the following sets:
\begin{equation*}
	{\Omega_T}={\Omega}\times(0,T]\quad \text{ and }\quad \partial {\Omega_T}=(\Omega\times\{0\})\cup(\partial\Omega\times(0,T)).
\end{equation*}
The closed ball in $\Omega$ centred at $x_0$ $(0_{\RR^n})$ is denoted by $B_{x_0}^R$ ($B^R$).  We denote the closure of $\Omega_T$ as $\overline\Omega_T$.  Here, $(x,t)=(x_1,\dots,x_n,t)\in\RR^n\times[0,T]$ denotes an $(n+1)-$dimensional vector. Moreover, we  denote $\langle\cdot,\cdot\rangle$ to be the Euclidean inner product in $\RR^n$ and $|\cdot|$ to be its induced norm. 

We denote the  following function spaces: $\RR(X)$ is the space of all functions with domain $X$ and codomain $\RR$; 
 $\Ccal(X)$ is the space of all continuous functions in $\RR(X)$; 
 for $l,m\in\NN$, $\Ccal^{l}(X)$ ($\Ccal^{l,m}(X)$) denotes the subspace of $\Ccal(X)$ such that $\partial^j_{x_i}u(x,t)$ (and $\partial^k_tu(x,t)$)  exist and are continuous on $X$ for all $j\leq l\  (\text{and } k\leq m)$, with $j,k\in\NN$ and $i=1,\dots,n$; 
for $\alpha \in (0,1]$, $H_\alpha(X)$ denotes the set of all $u\in \Ccal(X)$ that satisfy the spatial H\"older condition 
 $$\abs{u(x_1,t)-u(x_2,t)}\leq k_\alpha \abs{x_1-x_2}^\alpha\text{ for all }(x_1,t),(x_2,t)\in X$$
 for some constant $k_\alpha\in\RR_+$; 
 $L^\infty(X)$ is the subspace of $\RR(X)$ containing all functions with bounded essential supremum and infimum; and
   for $p\in[1,\infty)$, we denote $L^p(X)$ as the subspace of $\RR(X)$ containing all $u\in\RR(X)$ such that  $\int_X|u|^p<\infty$. 
 
The focus of this article concerns a class of nonlinear non-local integro-differential operators $P:\Ccal^{2,1}({\Omega_T})\cap L^\infty({{\Omega_T}})\to\mathbb{R}(\Omega_T)$ given by 
\begin{align}\label{eqPcal}
	P [u]= \sum_{i,j=1}^n a_{ij}\dxixj u+ \sum_{i=1}^n \bi\dxi u +f(\cdot,u,Ju)-\dt u\qquad\text{on }\Omega_T
\end{align}
for all $u\in\Ccal^{2,1}({\Omega_T})\cap L^\infty({{\Omega_T}})$ with $\aij,\bi:{\Omega_T}\to\RR$ such that: 
\begin{align}\label{parabolicity}
	A_{\min}\abs{\eta}^2\leq \sum_{i,j=1}^n\aij \eta_i\eta_j\qquad\ \text{on } {\Omega_T},\ \forall \eta\in \RR^n,
\end{align}
for some constant $A_{\min}\in \mathbb{R}_+$; 
$f:{\Omega_T}\times\RR^2\to\RR$; and with $Ju:{\Omega_T}\to\RR$ defined as the convolution product
\begin{align}\label{defJu}
	Ju(x,t)=(\phi*u)(x,t)=\int_{{\Omega}}\phi(x-y)u(y,t)dy \qquad\forall(x,t)\in {\Omega_T},
\end{align}
for $\phi:\Omega\to\RR$ such that  
\begin{align}\label{phiinL1}
	\phi\in L^1({\Omega}) \text{ and }\phi\geq0 \text{ on }\Omega.
\end{align}
We note that, if $f$ is independent of $Ju$ (but still dependent on $u$), $P$ reduces to a semi-linear second order parabolic partial differential operator.

Additionally, for $f:\Omega_T\times \mathbb{R}^2\to\mathbb{R}$, we say that $f$ satisfies: 
a {\emph{Lipschitz condition in $u$}} (and analogously $v$) on $\Omega_T\times \mathcal{K}$ for $\mathcal{K}\subset \mathbb{R}^2$, if there exists a constant $k_\mathcal{K}\geq 0$ such that 
\begin{equation*}
|f(x,t,u_1,v) - f(x,t,u_2,v)| \leq k_\mathcal{K} |u_1-u_2| \qquad \forall (x,t)\in\Omega_T,\ (u_1,v),(u_2,v)\in \mathcal{K};
\end{equation*}
an {\emph{upper Lipschitz condition in $u$}} on $\Omega_T\times \mathcal{K}$ for $\mathcal{K}\subset \mathbb{R}^2$, if there exists a constant $k_\mathcal{K}\geq 0$ such that 
\begin{equation*}
f(x,t,u_1,v) - f(x,t,u_2,v) \leq k_\mathcal{K} (u_1-u_2) \qquad \forall (x,t)\in\Omega_T,\ (u_1,v),(u_2,v)\in \mathcal{K}
\end{equation*}
such that $u_1\geq u_2$. 

\section{Main Results for Rapidly Decaying Integral Kernels}\label{Mainresults}

First we consider the linear instance of \eqref{eqPcal}, specifically,
\begin{align}\label{linearP}
	P[u]= \sum_{i,j=1}^n a_{ij}\dxixj u+ \sum_{i=1}^n \bi\dxi u+cu+dJu-\dt u\quad\text{ on }{\Omega_T}
\end{align}
with $c,d:{{\Omega_T}}\to\RR$ and $d\geq0$ on ${\Omega_T}$, and establish minimum principles for this operator. 
Subsequently we establish a comparison principle for $P$, as in \eqref{eqPcal}.
To begin, we have
\begin{proposition}[Weak Minimum Principle]\label{positivenessresult}
	Let $u\in \Ccal(\overline\Omega_T)\cap\Ccal^{2,1}({\Omega_T})\cap L^\infty(\overline\Omega_T)$, $P$ be an operator defined as in \eqref{linearP} and $\psi:{\Omega}\to\RR$ be given by
\begin{equation}
	\label{JME3.1*}
	\psi(x)=\phi(x)|x|^2 \quad\forall x\in{\Omega}.
\end{equation} 
Furthermore, suppose:  
	\begin{align}
	\label{assumption on P} &P[u] \leq 0 \quad\text{on } {\Omega_T};\\
	\label{assumption for u on the boundary}		 &u\geq0 \qquad\text{on } \boundary;\\
	\label{x1}	 &\sum_{i,j=1}^n\aij(x,t) \eta_i\eta_j\leq A(1+\abs{x}^2)\abs{\eta}^2\qquad\forall(x,t)\in {\Omega_T},\ \eta\in{\RR^n};\\
	\label{x2}	&\sum_{i=1}^nb_i(x,t)x_i\leq B (1+\abs{x}^2)\qquad\forall(x,t)\in {\Omega_T};\\
	\label{x3} &c(x,t)\leq C\qquad\forall(x,t)\in {\Omega_T};\\
	\label{x4}	& 0\leq d(x,t)\leq D\qquad\forall(x,t)\in {\Omega_T};\\ 
	\label{x0}& \psi\in L^1({\Omega});
	\end{align}
for non-negative constants $A$, $B$, $C$ and $D$. 
Then, $u\geq0$ on $\overline\Omega_T$.
\end{proposition}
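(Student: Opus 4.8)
The plan is to reduce the unbounded-domain statement to the classical weak minimum principle on bounded cylinders by constructing an explicit auxiliary (barrier) function that controls the non-local term. First I would assume, for contradiction, that $\inf_{\overline\Omega_T}u < 0$; by boundedness of $u$ there is some $\delta>0$ with $u \ge -M$ on $\overline\Omega_T$ and $\inf u =: -m < 0$. For a parameter $\lambda>0$ to be chosen and a small $\e>0$, I would set $w(x,t) = u(x,t) + \e e^{\lambda t}(1+\abs{x}^2)$ and aim to show $w \ge 0$ on $\overline\Omega_T$ for every $\e>0$, then let $\e\to0^+$. The point of the weight $g(x) = 1+\abs{x}^2$ is that it is an eigenfunction-like object for the ``bad'' growth terms in \eqref{x1}--\eqref{x2}: one computes $\sum \aij \dxixj g = 2\sum a_{ii}$, which by \eqref{x1} (taking $\eta = e_i$ and summing) is bounded by $2nA(1+\abs{x}^2)$, and $\sum b_i \dxi g = 2\sum b_i x_i \le 2B(1+\abs{x}^2)$ by \eqref{x2}; together with \eqref{x3} and $\dt(e^{\lambda t}g) = \lambda e^{\lambda t} g$ this gives, after choosing $\lambda$ large (depending only on $n,A,B,C$), that $L_0[e^{\lambda t}g] := \sum\aij\dxixj(e^{\lambda t}g) + \sum b_i\dxi(e^{\lambda t}g) + c\,e^{\lambda t}g - \dt(e^{\lambda t}g) \le 0$ on $\Omega_T$.

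The new ingredient beyond the purely local case is the term $d\,J(e^{\lambda t}g)$. Here is where hypotheses \eqref{JME3.1*} and \eqref{x0} enter: since $\phi \ge 0$ and $\phi, \psi \in L^1(\Omega)$,
\begin{align*}
J(e^{\lambda t}g)(x,t) = e^{\lambda t}\int_\Omega \phi(x-y)(1+\abs{y}^2)\,dy \le e^{\lambda t}\Big(2(1+\abs{x}^2)\|\phi\|_{L^1} + 2\|\psi\|_{L^1}\Big),
\end{align*}
using $\abs{y}^2 \le 2\abs{x-y}^2 + 2\abs{x}^2$ and the substitution $z = x-y$. Thus $J(e^{\lambda t}g)$ is itself dominated by a constant multiple of $e^{\lambda t}(1+\abs{x}^2)$, so enlarging $\lambda$ once more (now depending also on $D$, $\|\phi\|_{L^1}$, $\|\psi\|_{L^1}$) absorbs the $dJ$-contribution and yields $P[e^{\lambda t}g] = L_0[e^{\lambda t}g] + d\,J(e^{\lambda t}g) \le 0$ on $\Omega_T$. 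Consequently $P[w] = P[u] + \e P[e^{\lambda t}g] \le 0$ on $\Omega_T$, and $w \ge u \ge 0$ on $\partial\Omega_T$.

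Finally I would localise: because $u$ is bounded and $e^{\lambda t}g(x) \to \infty$ as $\abs{x}\to\infty$ uniformly in $t\in[0,T]$, there is $R = R(\e) > 0$ so large that $w > 0$ on $(\Omega \setminus B^R)\times[0,T]$. On the bounded cylinder $(\Omega\cap B^R)_T$ the function $w$ satisfies a linear non-local inequality with bounded coefficients, is continuous up to the closure, and is $\ge 0$ on the parabolic boundary of that cylinder (partly from \eqref{assumption for u on the boundary}, partly from the choice of $R$); the standard weak minimum principle for such operators on bounded domains — or equivalently the bounded-domain version of the present argument with the non-local term estimated by $\|d\|_\infty\|\phi\|_{L^1}\sup(-w)^-$ — then gives $w \ge 0$ on that cylinder, hence on all of $\overline\Omega_T$. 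Letting $\e \to 0^+$ with $\lambda$ fixed yields $u \ge 0$ on $\overline\Omega_T$. I expect the main obstacle to be handling the non-local term correctly inside the localisation step: on the truncated cylinder, $Jw$ still involves values of $w = u + \e e^{\lambda t}g$ outside $B^R$, so one must either keep track of the global inequality $P[w]\le 0$ together with the sign of $w$ outside $B^R$, or argue that the only possible interior negative minimum of $w$ over the whole strip is excluded directly by $P[w] \le 0$ together with $d \ge 0$ and $\phi\ge 0$ (at such a minimum $\dt w \le 0$, the second-order term is $\ge 0$, $Jw \ge (\inf w)\|\phi\|_{L^1}$, and one derives a contradiction with the sign of $c\cdot\inf w$ after the exponential shift has made the zeroth-order coefficient strictly negative). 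Getting the bookkeeping of these competing signs right, uniformly on the unbounded strip, is the crux.
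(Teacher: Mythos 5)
Your approach is correct in outline but takes a genuinely different route from the paper, and the concluding step is left with a gap that you half-diagnose but do not close.

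\textbf{Comparison of approaches.} You work with an additive barrier $w = u + \e e^{\lambda t}(1+\abs{x}^2)$, whereas the paper performs the multiplicative change of unknown $v = u\,(1+\abs{x}^2)^{-1}e^{-\nu t}$ and then perturbs by the \emph{constant} barrier $\e$. Both choices force positivity of the perturbed function outside a large ball; the technical crux is identical in both — bounding $\int_\Omega\phi(x-y)(1+\abs{y}^2)\,dy$ by a constant multiple of $(1+\abs{x}^2)(\norm{\phi}_1+\norm{\psi}_1)$, which is exactly the paper's estimate~\eqref{prop2.1calculations} in disguise, and is where $\psi\in L^1(\Omega)$ enters. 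Your route is the more classical Phragmén--Lindelöf barrier construction; the paper's route converts $u$ into a bounded function vanishing at infinity so that the perturbing barrier is simply $\e$, at the price of computing the transformed operator $\bar P$. Either style is viable here.

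\textbf{The gap.} Your final paragraph attempts the contradiction at an interior \emph{negative minimum} of $w$, but this does not close. At such a point $(x^*,t^*)$ with $w^* := w(x^*,t^*) < 0$ one only gets $J w(x^*,t^*) \ge w^*\norm{\phi}_1$ and $c\,w^* \ge C w^*$, both of which are \emph{negative}, so the chain $0 \ge P[w](x^*,t^*) \ge \dots$ does not terminate in anything false: you obtain $0 > (C + D\norm{\phi}_1)\,w^*$, which is simply true. The parenthetical appeal to ``the exponential shift has made the zeroth-order coefficient strictly negative'' would rescue this, but your $e^{\lambda t}$ sits inside the additive barrier, not as a multiplicative factor on $u$, so no coefficient has been shifted. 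The correct fix — the one the paper uses — is to argue at the \emph{first time $w$ touches zero}: since $w > 0$ on $\partial\Omega_T$ and outside a fixed ball, if $w$ ever becomes negative there is $(x^*,t^*)$ with $w(x^*,t^*)=0$ and $w \ge 0$ on $\Omega\times[0,t^*]$. There $c\,w(x^*,t^*) = 0$ (the troublesome zeroth-order term drops out), $Jw(x^*,t^*) \ge 0$ because $w(\cdot,t^*)\ge 0$ and $\phi\ge 0$, the second-order term is $\ge 0$ by ellipticity plus positive semi-definiteness of the Hessian, and $\partial_t w(x^*,t^*)\le 0$; hence $P[w](x^*,t^*)\ge 0$. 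This contradicts the \emph{strict} inequality $P[w]<0$, which your computation does deliver provided you take $\lambda$ strictly larger than $2nA + 2B + C + 2D(\norm{\phi}_1 + \norm{\psi}_1)$ — another small imprecision, since you only recorded $P[w]\le 0$. With the touching-time framing and strict inequality made explicit, your proof is complete.
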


\begin{proof}
We define the auxiliary function $v:\overline\Omega_T\to \RR$ to be 
\begin{align}\label{v for positive result}
	v(x,t)=u(x,t)\theta(x)e^{-\nu t}\qquad\forall(x,t)\in\overline\Omega_T,
\end{align}
with $\nu$ a positive constant to be specified, and $\theta:{\Omega}\to\RR_+$ given by 
\begin{align}\label{defoftheta}
	\theta(x)=\frac{1}{\abs{x}^2+1}\qquad \forall x\in{\Omega}.
\end{align}  
It follows from \eqref{v for positive result} and \eqref{defoftheta} that $v\in \Ccal(\overline\Omega_T)\cap\Ccal^{2,1}({\Omega_T})\cap L^\infty(\overline\Omega_T)$ with  
\begin{align*}
	|v(x,t)|\leq \norm{u}_\infty\theta(x)\qquad\forall(x,t)\in\overline\Omega_T.
\end{align*}
Therefore,
\begin{align}\label{v bounded form below}
	\lim_{\abs{x}\to\infty}v(x,t)=0
\end{align}
with the convergence uniform for all $t\in[0,T]$. By considering $h:\overline\Omega_T\to\RR_+$ given by 
\begin{align*}
	h(x,t)=\frac{1}{\theta(x)}e^{\nu t}\qquad\forall(x,t)\in \overline \Omega_T,
\end{align*}
it follows from \eqref{assumption on P} and \eqref{v for positive result} that 
\begin{align}\label{Positivity of Pbar}
	\bar P [v] =\sum_{i,j=1}^n a_{ij}\dxixj v+ \sum_{i=1}^n \bar{b}_i \dxi v+\bar cv+\frac{d}{h}J(vh)-\dt v\leq 0\text{ on }{\Omega_T}
\end{align}
with
\begin{align}\label{x5}
	\bar{b}_i (x,t)&= \bi(x,t)+\sum_{j=1}^n2x_j(\aij(x,t)+a_{ji}(x,t))\theta(x)\qquad\forall(x,t)\in {\Omega_T},\\
	\bar c(x,t)&=c(x,t)+\sum_{i=1}^n 2x_i b_i(x,t)\theta(x)+\sum_{i=1}^n 2a_{ii}(x,t) \theta(x)-\nu\qquad\forall(x,t)\in {\Omega_T}\label{x6}. 
\end{align}
Via substitution of \eqref{x1}-\eqref{x3} into \eqref{x6} we observe that $\bar c$ is bounded above. Hence, there exists a positive constant $\bar C$ such that 
\begin{align}\label{boundnessofcbar}
	\bar c(x,t)\leq \bar C\qquad\forall(x,t)\in {\Omega_T}.
\end{align}
 Now, using \eqref{phiinL1} and \eqref{x0} we demonstrate that  $\frac{1}{h} Jh$ is bounded on ${\Omega_T}$. 
Observe that:
\begin{align}\notag
		\frac{1}{h(x,t)}Jh(x,t) & = \theta(x)\int_{{\Omega}}\phi(x-y)\frac{1}{\theta(y)}dy\\ \notag
		&=\int_{{\Omega}}\phi(y)\frac{\theta(x)}{\theta(x-y)}dy\\ \notag
		&=\int_{{\Omega}}\phi(y)\frac{\abs{x-y}^2+1}{\abs{x}^2+1}dy\\ \notag
		&=\int_{{\Omega}}\phi(y)\frac{\abs{x}^2-2\langle x,y\rangle+\abs{y}^2+1}{\abs{x}^2+1}dy\\ \notag
		&\leq\int_{{\Omega}}\phi(y)\left[1+\abs{y}\left(\frac{2\abs{x}}{\abs{x}^2+1}\right)+\abs{y}^2\left(\frac{1}{\abs{x}^2+1}\right) \right]dy \\
		\label{prop2.1calculations}
		&\leq 3(||\phi||_1 + ||\psi||_1) \qquad\forall (x,t)\in {\Omega_T}.
\end{align}
Hence via \eqref{x4}, \eqref{boundnessofcbar} and \eqref{prop2.1calculations}, it follows that we can choose  $\nu$ to be sufficiently large so that 
\begin{align}\label{Positivity of Pbar of 1}
	\bar P[1]=\bar c+\frac{d}{h}Jh<0\on\Omega_T.
\end{align}
Next, via \eqref{v bounded form below}, for any $\e>0$, the function $w=v+\e$ satisfies
\begin{align*}
	\lim_{\abs{x}\to\infty }w(x,t)= \e
\end{align*}
with the convergence uniform for all $t\in[0,T]$. 
Therefore, there exist $x_0\in\Omega$ and $R>0$ such that
\begin{align*}
w>0, \quad \text{on }(\Omega\setminus B_{x_0}^R)\times[0,T].
\end{align*}
It remains to establish that $w>0$ in $ B_{x_0}^R\times[0,T]$. Via \eqref{Positivity of Pbar} and \eqref{Positivity of Pbar of 1}, it follows that 
\begin{align}\label{dif ineq for w}
	\bar P[w]=\bar P[v]+\e\bar P[1]<0 \on {\Omega_T}.
\end{align}
For a contradiction, suppose that there exists $(x,t)\in B_{x_0}^R\times[0,T]$ such that $w(x,t)< 0$. 
Since $w>0$ on $(\partial B_{x_0}^R\times[0,T])\cup(B_{x_0}^{R}\times\{0\} )$ and $w\in\Ccal(\overline\Omega_T)$, there exists $(x^*,t^*)\in (B_{x_0}^R\setminus\partial B_{x_0}^R)\times (0,T]$ such that $w(x^*,t^*)=0$ and $w>0$ on $B_{x_0}^R\times[0,t^*)$. 
Additionally, for $i=1,\dots,n$, $\partial_{x_i}w(x^*,t^*)=0$,  $\dt w(x^*,t^*)\leq0$ and the Hessian matrix $D^2w (x^*,t^*)=[\dxixj w(x^*,t^*)]$ is positive semi-definite. 
Moreover, it follows from  \eqref{phiinL1}, \eqref{x4}, \eqref{Positivity of Pbar} and \eqref{parabolicity} with the Schur Product Theorem that 
\begin{align*}
 	\bar P[w](x^*,t^*)=\sum_{i,j=1}^n\aij\dxixj w(x^*,t^*)+\frac{d(x^*,t^*)}{h(x^*,t^*)}J(wh)(x^*,t^*)-\dt w(x^*,t^*)\geq0
 \end{align*}
which contradicts \eqref{dif ineq for w}. 
Therefore, $w> 0$ on ${\Omega_T}$. 
Letting $\e\to0$ establishes that $v\geq0$ on ${\Omega_T}$, and hence, $u\geq0$ on $\overline\Omega_T$, as required.
\end{proof}

\begin{remark}
	Note that by the argument above we relaxed the restriction on $\phi$ in \cite[Theorem~5]{Ducrot2011}, where $\mathrm{supp}(\phi) $ is compact. Here, we allow $\phi$ to decay as $|x|\to\infty$ albeit with the decay rate constrained by the integrability of $\psi$ .
\end{remark}

By combining Proposition \ref{positivenessresult} with a strong minimum principle for second order linear parabolic partial differential inequalities (see, for instance, \cite[Chapter 2]{Friedman2008}), we obtain

\begin{corollary}[Strong Minimum Principle]\label{SMPLinearnon-local} 
Suppose that the conditions of Proposition \ref{positivenessresult} are satisfied. Furthermore, suppose that for any $x_0\in \Omega$ and $R>0$,  $\aij,\bi,c\in L^\infty( B_{x_0}^R\times(0,T])$. Then, either $u\equiv0$ on $\overline\Omega_T$ or $u>0$ on ${\Omega_T}$.
\end{corollary}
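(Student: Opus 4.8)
The plan is to deduce the strong minimum principle from Proposition \ref{positivenessresult} together with a classical strong minimum principle for local second order parabolic operators, applied on bounded subcylinders. First, by Proposition \ref{positivenessresult}, the hypotheses already give $u\geq 0$ on $\overline\Omega_T$. So the dichotomy to establish is: if $u$ vanishes at some interior point of $\Omega_T$, then $u\equiv 0$ on $\overline\Omega_T$. Fix a point $(x_1,t_1)\in\Omega_T$ with $u(x_1,t_1)=0$. The idea is to rewrite the non-local inequality $P[u]\leq 0$, valid on all of $\Omega_T$, as a \emph{local} differential inequality on bounded subcylinders, treating the term $dJu$ as a non-negative zeroth-order forcing term: since $u\geq 0$ on $\overline\Omega_T$, $\phi\geq 0$ and $d\geq 0$, we have $dJu\geq 0$ on $\Omega_T$, hence on $B_{x_0}^R\times(0,T]$,
\begin{equation*}
\sum_{i,j=1}^n a_{ij}\dxixj u+ \sum_{i=1}^n \bi\dxi u+cu-\dt u = -dJu \leq 0,
\end{equation*}
i.e. $u$ is a supersolution of the local operator $L_0[w]:=\sum a_{ij}\dxixj w+\sum b_i\dxi w+cu-\dt w$ on each such bounded cylinder. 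On $B_{x_0}^R\times(0,T]$ the coefficients $a_{ij},b_i,c$ are bounded (by the added hypothesis) and $a_{ij}$ is uniformly elliptic there (by \eqref{parabolicity}, with constant $A_{\min}$ independent of the cylinder), so the classical strong minimum principle (e.g. \cite[Chapter 2]{Friedman2008}) applies: a non-negative supersolution that attains the value $0$ at an interior point must vanish on the whole ``past'' of that point within the cylinder, in particular on the horizontal slice through it and on a full sub-cylinder below it.

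The second step is a connectedness/propagation argument to upgrade this local conclusion to all of $\Omega_T$. Let $Z=\{(x,t)\in\overline\Omega_T: u(x,t)=0\}$; it is relatively closed in $\overline\Omega_T$ since $u$ is continuous. The strong minimum principle on bounded cylinders shows that if $(x_1,t_1)\in Z\cap\Omega_T$, then for every $x_0$ and $R$ with $(x_1,t_1)\in B_{x_0}^R\times(0,T]$ and $\overline{B_{x_0}^R}\subseteq\Omega$, the zero set contains the slice $\{t=t_1\}\cap(B_{x_0}^R\times\{t_1\})$ and a backward neighbourhood. Covering $\Omega$ by such balls and using that $\Omega$ is a domain (connected), one propagates $u\equiv 0$ across the whole time-slice $\Omega\times\{t_1\}$ and then backward in time to $\Omega\times(0,t_1]$, and by continuity up to $\Omega\times\{0\}$. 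A standard way to organise this: let $\tau=\inf\{t\in(0,T]: u(x,t)=0\text{ for some }x\in\Omega\}$; the set of times $t$ for which $u(\cdot,t)\equiv 0$ on $\Omega$ is shown to be both relatively open and closed in $[\tau,T]$ hence equal to it, and then one checks $\tau=0$ by the backward propagation from any zero. If instead $Z\cap\Omega_T=\emptyset$, then $u>0$ on $\Omega_T$, which is the other alternative. Finally, $u\equiv 0$ on $\Omega_T$ forces $u\equiv 0$ on $\overline\Omega_T$ by continuity.

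The main obstacle, and the point requiring the most care, is the unboundedness of $\Omega$: the classical strong minimum principle is stated for bounded cylinders, so one cannot apply it globally in one shot, and the propagation across $\Omega$ must be handled by a chain of overlapping bounded cylinders together with connectedness of $\Omega$. One must also be slightly careful that near $\partial\Omega$ the balls $B_{x_0}^R$ used are chosen with $\overline{B_{x_0}^R}\subseteq\Omega$ so that the boundedness hypothesis on the coefficients applies; propagation of the zero set up to $\partial\Omega$ then follows only by continuity of $u$, not by the interior principle, which is consistent with the stated conclusion (the dichotomy is between $u\equiv 0$ on $\overline\Omega_T$ and $u>0$ on the open-in-time-and-space set $\Omega_T$). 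A minor secondary point is to note that the uniform ellipticity constant $A_{\min}$ from \eqref{parabolicity} does not degenerate as the cylinders exhaust $\Omega$, so the local strong minimum principle is genuinely applicable on each of them even though \eqref{x1} allows the upper bound on $\sum a_{ij}\eta_i\eta_j$ to grow like $|x|^2$.
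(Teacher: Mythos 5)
Your proposal is correct and follows essentially the same route as the paper: first apply Proposition \ref{positivenessresult} to get $u\geq 0$, then use $u\geq 0$, $\phi\geq 0$, $d\geq 0$ to bound $dJu\geq 0$ and thereby reduce the non-local inequality to a local one-sided parabolic inequality on each bounded cylinder $B_{x_0}^R\times(0,T]$, and finally invoke the classical strong minimum principle from \cite[Chapter 2]{Friedman2008}. The only difference is one of exposition: the paper leaves the propagation of the zero set across bounded cylinders implicit in the citation, whereas you spell out the connectedness argument; this is a useful expansion but not a different method.
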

\begin{proof}
By Proposition \ref{positivenessresult}, $u\geq0$ on $\Omega_T$. Moreover, since $\phi\geq0$ on ${\Omega}$ we have
	\begin{align}\label{rearrange dJu}
		&\sum_{i,j=1}^n\aij\dxixj u+\sum_{i=1}^n\bi\dxi u+cu-\dt u\leq -dJu\leq 0\qquad\text{on }{\Omega_T}.
	\end{align}
Therefore, for any $R>0$, the inequality \eqref{rearrange dJu} holds on $ B_{x_0}^R\times(0,T]$. 
Via the Strong Minimum principle for linear parabolic partial differential inequalities \cite[Chapter 2]{Friedman2008} the result follows, as required. 
\end{proof}	 

\begin{remark}
The assumption on $c$ of Corollary \ref{SMPLinearnon-local} cannot be relaxed to the assumption on $c$ in Proposition \ref{positivenessresult} (see \cite[Section 3]{Needham2015}). 
However we note that one can establish a sharper strong minimum principle than Corollary \ref{SMPLinearnon-local}, requiring additional technicalities on $c$, using the regularised distance functions constructed in \cite{Lieberman1985}, within a standard strong minimum principle argument. 
\end{remark}
We now establish a comparison principle for  $P$ in 
\begin{theorem}[Comparison Principle]\label{nonlinear comparison thm}
	Let $\underline u,\overline u\in \Ccal(\overline\Omega_T)\cap\Ccal^{2,1}({\Omega_T})\cap L^\infty(\overline\Omega_T) $, $P$ be as in \eqref{eqPcal}-\eqref{phiinL1}, $\psi:{\Omega}\to\RR$ be given by \eqref{JME3.1*}, and 
\begin{equation}
\label{defmathcalK}	
\mathcal{K}=\left[\inf_{\overline\Omega_T}\min\{\overline u,\underline u\},\sup_{\overline\Omega_T}\max \{\overline u,\underline u\}\right]\times\left[\inf_{\overline\Omega_T}\min\{J\overline u,J\underline u\},\sup_{\overline\Omega_T}\max\{J\overline u,J\underline u\}\right].
\end{equation} 
	Furthermore, suppose:
	\begin{align} 
		&P[\underline u]\geq P[\overline u]\quad\text{on } {\Omega_T};\\
		&\underline u\leq \overline u \quad\text{on } \boundary;\\
		&\sum_{i,j=1}^n\aij(x,t) \eta_i\eta_j\leq A(1+\abs{x}^2)\abs{\eta}^2\qquad\forall(x,t)\in {\Omega_T},\ \eta\in{\RR^n};\\
		&\sum_{i=1}^nb_i(x,t)x_i\leq B (1+\abs{x}^2)\qquad\forall(x,t)\in {\Omega_T};\\ \label{condition on psi}
		&\psi\in L^1({\Omega});\\ 
		&\label{thmass5.5}  f(x,t,u,Ju) \text{ satisfies an upper Lipschitz condition in } u \text{ on } \Omega_T\times \mathcal{K};\\ 
		&\label{thmass5}	f(x,t,u,Ju) \text{ satisfies a Lipschitz condition in } Ju \text{ on } \Omega_T\times \mathcal{K}; \\
		&\label{thmass6} f(x,t,u,Ju) \text{ is non-decreasing with respect to } Ju \text{ on } \Omega_T\times \mathcal{K}. 
	\end{align}
 Then, $\underline u\leq \overline u$ on $\overline\Omega_T$.
\end{theorem}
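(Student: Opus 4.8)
The plan is to reduce the comparison principle to the weak minimum principle of Proposition \ref{positivenessresult} applied to the difference $u = \overline u - \underline u$. First I would set $u = \overline u - \underline u$, which lies in $\Ccal(\overline\Omega_T)\cap\Ccal^{2,1}(\Omega_T)\cap L^\infty(\overline\Omega_T)$ and satisfies $u \geq 0$ on $\partial\Omega_T$ by the boundary hypothesis. From $P[\underline u] \geq P[\overline u]$ on $\Omega_T$, expanding both sides using \eqref{eqPcal} and cancelling the common second-order and first-order linear terms, I obtain
\begin{equation*}
\sum_{i,j=1}^n a_{ij}\dxixj u + \sum_{i=1}^n b_i\dxi u - \dt u \leq f(\cdot,\overline u, J\overline u) - f(\cdot,\underline u, J\underline u) \quad\text{on }\Omega_T.
\end{equation*}

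The key step is to rewrite the right-hand side so that it fits the linear template \eqref{linearP} with coefficients $c$ bounded above and $0 \leq d$ bounded. I would split the difference of the nonlinear terms as
\begin{equation*}
f(\cdot,\overline u,J\overline u) - f(\cdot,\underline u,J\underline u) = \big(f(\cdot,\overline u,J\overline u) - f(\cdot,\underline u,J\overline u)\big) + \big(f(\cdot,\underline u,J\overline u) - f(\cdot,\underline u,J\underline u)\big).
\end{equation*}
For the first bracket, since $\overline u \geq \underline u$ is not yet known (that is what we are proving), I instead use the upper Lipschitz condition \eqref{thmass5.5} on the set where $\overline u \geq \underline u$ and handle the complementary set directly; more cleanly, I would define
\begin{equation*}
c(x,t) = \begin{cases} \dfrac{f(x,t,\overline u,J\overline u) - f(x,t,\underline u,J\overline u)}{\overline u - \underline u}, & \overline u(x,t) \neq \underline u(x,t),\\[2mm] 0, & \text{otherwise},\end{cases}
\end{equation*}
and argue that $c \leq k_{\mathcal{K}}$ on $\Omega_T$: on points where $\overline u > \underline u$ this is immediate from \eqref{thmass5.5}, on points where $\overline u < \underline u$ the upper Lipschitz condition applied with the roles reversed gives $f(\cdot,\underline u,J\overline u) - f(\cdot,\overline u,J\overline u) \leq k_{\mathcal{K}}(\underline u - \overline u)$, i.e. again $c \leq k_{\mathcal{K}}$. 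For the second bracket, the Lipschitz condition in $Ju$ gives $|f(\cdot,\underline u,J\overline u) - f(\cdot,\underline u,J\underline u)| \leq k_{\mathcal{K}}|J\overline u - J\underline u| = k_{\mathcal{K}}|Ju|$; using monotonicity \eqref{thmass6} in $Ju$ together with $\phi \geq 0$, one sees $Ju \geq 0$ wherever we need the sign to cooperate, and in general I would write the second bracket as $d\, Ju$ for a suitable measurable $d$ with $0 \leq d \leq k_{\mathcal{K}}$, defining $d$ analogously by a difference quotient in the $Ju$ variable and noting that \eqref{thmass6} forces the numerator to have the same sign as $Ju$, so $d \geq 0$, while \eqref{thmass5} bounds $d \leq k_{\mathcal{K}}$.

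With these choices, $u$ satisfies $\tilde P[u] := \sum a_{ij}\dxixj u + \sum b_i \dxi u + cu + d\,Ju - \dt u \leq 0$ on $\Omega_T$, where $c$ is bounded above by the constant $k_{\mathcal{K}}$ and $d$ satisfies $0 \leq d \leq k_{\mathcal{K}}$. The structural hypotheses \eqref{x1}, \eqref{x2} on the $a_{ij}$ and $b_i$ are exactly those carried over in the theorem statement, \eqref{x0} is \eqref{condition on psi}, and $u \geq 0$ on $\partial\Omega_T$ holds. Hence all hypotheses \eqref{assumption on P}--\eqref{x0} of Proposition \ref{positivenessresult} are met, and the Proposition yields $u \geq 0$ on $\overline\Omega_T$, i.e. $\underline u \leq \overline u$ on $\overline\Omega_T$, as required.

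The main obstacle I anticipate is the careful construction of the coefficient functions $c$ and $d$ as difference quotients and verifying that they are genuine real-valued functions on $\Omega_T$ (finite everywhere, with the stated one-sided and two-sided bounds) rather than merely measurable — in particular, confirming that the case analysis on the sign of $\overline u - \underline u$ really does deliver $c \leq k_{\mathcal{K}}$ using only the \emph{upper} Lipschitz condition \eqref{thmass5.5}, and that the monotonicity assumption \eqref{thmass6} is exactly what is needed to guarantee $d \geq 0$. One must also check that $Ju = J\overline u - J\underline u$ makes sense and that $(\overline u(x,t), J\overline u(x,t))$ and $(\underline u(x,t), J\underline u(x,t))$ indeed lie in $\mathcal{K}$ for all $(x,t) \in \Omega_T$, which is immediate from the definition \eqref{defmathcalK} of $\mathcal{K}$ as the product of the relevant ranges, so that the Lipschitz constants $k_{\mathcal{K}}$ apply uniformly.
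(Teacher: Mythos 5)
Your proposal is correct and takes essentially the same route as the paper: you define $w=\overline u-\underline u$, rewrite $f(\cdot,\overline u,J\overline u)-f(\cdot,\underline u,J\underline u)$ as $cw+dJw$ via the same difference-quotient constructions \eqref{JMEc}--\eqref{JMEd}, verify $c\le k_{\mathcal K}$ from \eqref{thmass5.5} and $0\le d\le k_{\mathcal K}$ from \eqref{thmass5}--\eqref{thmass6}, and then invoke Proposition~\ref{positivenessresult}. The brief aside about $Ju\ge 0$ ``where the sign cooperates'' is a red herring --- the sign of $Jw$ is irrelevant precisely because monotonicity makes $d\ge 0$ via the difference quotient --- but you supersede it immediately with the correct argument.
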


\begin{proof}
	We define the auxiliary function $w:\overline\Omega_T\to\RR$ to be
	\begin{align}\label{previous2.26}
		w(x,t)=\overline u(x,t)-\underline u(x,t)\qquad\forall(x,t)\in\overline\Omega_T.
	\end{align}
	Then $w\in \Ccal(\overline\Omega_T)\cap\Ccal^{2,1}({\Omega_T})\cap L^\infty(\overline\Omega_T) $, $w\geq0$ on $\boundary$, and $w$ satisfies the differential inequality
	\begin{align}\label{proteleutaia?}
		\sum_{i,j=1}^n\aij\dxixj w+\sum_{i=1}^n\bi\dxi w+(f(\cdot,\overline u,J\overline u)-f(\cdot,\underline u,J\underline u))-\dt w\leq 0\qquad\text{on }{\Omega_T}.
	\end{align}
	Noting that $w,Jw\in L^\infty(\overline\Omega_T)$, it follows, via \eqref{defmathcalK}, that  $(w(x,t),Jw(x,t))\in\mathcal{K}$ for all $(x,t)\in\overline\Omega_T$.
	It follows that there exist $c,d:{\Omega_T}\to \RR$ such that 
	\begin{align}\label{teleutaia?}
	f(\cdot,\overline u,J\overline u)-f(\cdot,\underline u,J\underline u)=c w+d Jw\qquad\text{on }{\Omega_T},
	\end{align} 
	with $c$ and $d$ given by:
	\begin{align}
	\label{JMEc}
	c(x,t) & =
	\begin{cases}
	\dfrac{f(x,t,\overline{u}(x,t),J\overline{u}(x,t)) - f(x,t,\underline{u}(x,t),J\overline{u}(x,t))}{\overline{u}(x,t)-\underline{u}(x,t)}, & \overline{u}(x,t)\not=\underline{u}(x,t), \\
	0, & \overline{u}(x,t)=\underline{u}(x,t),
	\end{cases}
	\\
	\label{JMEd}
	d(x,t) & =
	\begin{cases}
	\dfrac{f(x,t,\underline{u}(x,t),J\overline{u}(x,t)) - f(x,t,\underline{u}(x,t),J\underline{u}(x,t))}{J\overline{u}(x,t)-J\underline{u}(x,t)}, & J\overline{u}(x,t)\not=J\underline{u}(x,t), \\
	0, & J\overline{u}(x,t)=J\underline{u}(x,t),
	\end{cases}
	\end{align}
for all $(x,t)\in\Omega_T$. 
Since $f$ satisfies \eqref{thmass5.5}-\eqref{thmass6}, it follows that $c$ and $d$ satisfy \eqref{x3} and \eqref{x4} respectively. 
Therefore, $w$ in \eqref{previous2.26} and the linear operator obtained from substitution of \eqref{teleutaia?} into \eqref{proteleutaia?} satisfy the conditions of Proposition \ref{positivenessresult}. Therefore, $w\geq 0$ on $\overline\Omega_T$ and hence $\underline u\leq \overline u$ on ${\overline\Omega_T}$, as required.
\end{proof}

\begin{remark}
The motivation for establishing Proposition \ref{positivenessresult}, was to demonstrate that a minimum principle for the nonlinear non-local integro differential operator $P$,
 can hold generally for $\phi$ without compact support. 
A more general statement than Proposition \ref{positivenessresult} can be readily established by considering a general auxiliary function (here given by $\theta(x)=(1+|x|^2)^{-1}e^{-\nu t}$) as detailed in \cite[Section 2]{Meyer2014} or \cite[p.213]{Walter1970}.
Investigations into the relaxation of the conditions on $\aij,\bi,c$ in Proposition \ref{linearP}, when coupled with growth/decay conditions on $u$ as $|x|\to\infty$, are illustrated in  \cite{Meyer2014} and the references therein. 
However, to consider the relaxation of growth conditions on $d$ (as $|x|\to\infty$) and the integrability conditions on $\phi$, further investigation is required.
\end{remark}

\begin{remark}\label{remark 3.7}
A condition analogous to \eqref{thmass6} is illustrated to be required in Proposition \ref{positivenessresult} via the following initial-boundary value problem.
The initial-boundary value problem was chosen due to the pathological behaviour of travelling wave solutions of \eqref{exeq3}, as illustrated in \cite{Billingham2020}. 

Let $\Omega=\RR$ and $P:\Ccal^{2,1}({\Omega_T})\cap L^\infty(\Omega_T)\to\RR(\Omega_T)$ be given by 
\begin{align}\label{exeq1}
P[u]=D\dxx u+f(u,Ju)-\dt u\qquad\text{on }\Omega_T,
 \end{align}
for all $u\in\Ccal^{2,1}(\Omega_T)\cap L^\infty(\Omega_T) $ with: 
$D$ a positive constant; 
$f:\RR^2\to\RR$ given by  
\begin{align}\label{exeq2}
f(u,Ju)=u(1-Ju)\qquad\forall(u,Ju)\in\RR^2;
\end{align}
and with 
\begin{equation}
	\label{exeq2*}	\phi\in L^1(\RR)\cap\Ccal(\RR),\ \phi\geq0\text{ and even on }\RR,\ \norm{\phi}_1=1,\ [-k,k] \subset \mathrm{supp}(\phi) ,
\end{equation}
for some $k\in(0,\infty)$, and
\begin{equation}
\label{JMFE1}
 \int_\mathbb{R} \phi(y) y^2 < \infty .
\end{equation} 
The initial-boundary value problem is given by:
\begin{align}
\label{exeq3}	P[u]=0\quad\text{ on }\Omega_T;\\ 
\label{exeq4}	u=u_0\quad\text{ on }\boundary;
\end{align}
with $P$ as in \eqref{exeq1}-\eqref{JMFE1}; 
and $u_0\in \Ccal^2(\boundary)$ such that 
\begin{align}\label{exeq5}
u_0(x)=\begin{cases}
	1, &x\in(-\infty,0),\\
	\eta(x),&x\in[0,1],\\
	0,&x\in(1,\infty),
\end{cases}
\end{align}
with $\eta:[0,1]\to[0,1]$ a decreasing function. 
Now consider $\overline u:\overline\Omega_T\to\RR$ given by $\bar u\equiv 1$ on $\overline\Omega_T$ and $\underline u:\overline\Omega_T\to\RR$ to be the unique solution to \eqref{exeq3}-\eqref{exeq5} (guaranteed to exist for $T$ sufficiently small via \cite[Chapter 9, Theorem 2.10]{Volpert2011a}).
It follows, from the smoothness of the initial data (see \cite[Chapter 5]{Meyer2015} for details) that $\underline u\in\Ccal^{2,1}(\overline\Omega_T)$.
Now we have:
$$P[\underline u]= 0=1-\norm{\phi}_1=P[\overline u]\quad \text{ on }\Omega_T,$$
and
$$\underline u\leq \overline u\quad\text{ on } \boundary.$$
Further note that $\partial_{Ju}f(1,1)=-1<0$. 
Thus, $P, \underline u$ and $\overline u$ satisfy all of the conditions of Theorem \ref{nonlinear comparison thm}, except the non-decreasing condition in \eqref{thmass6}. 
However, since $u\in\Ccal^{2,1}(\overline\Omega_T)$, and $Ju_0(0)\in(0,1)$, it follows that
\begin{align*}
	0&=\lim_{\e\to0} \left.\left(D\partial_{xx}\underline u+\underline u(1-J\underline u)-\dt \underline u\right)\right|_{(x,t)=(0,\e)}\\
	&=D\partial_{xx}\underline u(0,0)+1(1-J\underline u_0(0))-\dt \underline u(0,0)\\
	&>-\dt \underline u(0,0)
\end{align*}
which implies that 
\begin{align*}
	\dt \underline u(0,0) >0.
\end{align*}
Therefore, there exists $(x^*,t^*)\in\{0\}\times(0,T)$ such that $\underline u(x^*,t^*)>1=\overline u(x^*,t^*)$, violating the conclusion of Theorem \ref{nonlinear comparison thm}. 
\end{remark}
\newcommand{\xtstar}{(x^*,t^*)}

\begin{remark}
    Note that if we assume $\Omega$ is a bounded domain in $\RR^n$, all results in this section remain valid,  and notably, the assumption $\int_\Omega\phi(x)|x|^2dx<\infty$ is redundant.
\end{remark}

\section{A Comparison Result when the Integral Kernel is not Rapidly Decaying}\label{Section4}
We now consider the comparison principle with $\phi\in L^1(\Omega)$ and $\phi\geq0$ on $\Omega$ (and omit the condition $\int_\Omega|x|^2\phi(x)dx<\infty$, used in Section \ref{Mainresults}). 
To prove a comparison principle in this setting, we follow an approach presented in \cite[Chapters 1 and 2]{Friedman2008} that utilises fundamental solutions for second order linear parabolic partial differential operators. 
We begin by stating properties of fundamental solutions we will subsequently employ. 
In this Section we restrict attention to $\Omega=\RR^n$, and discuss $\Omega\neq\RR^n$ in the concluding remarks. 

\newcommand{\alphaij}{\alpha_{ij}}
\newcommand{\betai}{\beta_i}
\newcommand{\fundamentalsln}{\Gamma(x,t;\xi,\tau)}
\renewcommand{\d}{\delta}

	
Let $L:\Ccal^{2,1}(\Omega_T)\to\RR(\Omega_T)$ be given by 
	\begin{align}\label{friedparabeqn}
		L[u]=\sum_{i,j=1}^n\aij\dxixj u+\sum_{i=1}^n\bi\dxi u+c u-\dt u\qquad\text{on }\Omega_T
	\end{align}
for all $u\in\Ccal^{2,1}(\Omega_T)$. We suppose that for some $\alpha \in (0,1]$, the coefficients $\aij,\bi,c:\overline\Omega_T\to\RR$ are such that 
\begin{align}\label{3.1.1}
	\aij,\bi,c\in L^\infty(\overline\Omega_T)\cap H_\alpha(\overline\Omega_T).
\end{align}
Furthermore, $a_{ij}=a_{ji}$ on $\Omega_T$, and
\begin{align}\label{newparabcond}
	A_{min}|\eta|^2\leq\sum_{i,j=1}^n\aij(x,t)\eta_i\eta_j\leq A_{max}|\eta|^2\qquad\forall(x,t)\in\overline\Omega_T,\ \eta\in\RR^n,
\end{align} 
 for some constants $A_{min},A_{max}\in \mathbb{R}_+$, and 
\begin{align}\label{3.1.2}
	\abs{\aij(x_1,t_1)-\aij(x_2,t_2)}\leq k_\alpha (\abs{x_1-x_2}^\alpha+\abs{t_1-t_2}^{ \alpha/ 2})
\end{align}
for all $(x_1,t_1),(x_2,t_2)\in \overline\Omega_T$ for some constants $k_\alpha \in\RR_+$.
Note that, in order to apply the theory presented in \cite{Friedman2008}, we require the coefficients to be continuously extendable onto $\boundary$.
Alternative conditions on $\aij$, $\bi$ and $c$ to those presented here (which are sufficient to establish the existence of fundamental solutions for $L$) are discussed in \cite[p.356-414]{Ladyzenskaja1968}.
We now state
\begin{lemma}\label{first fundamental lemma}
	Let $L$ be an operator as in \eqref{friedparabeqn} and  suppose that \eqref{3.1.1}-\eqref{3.1.2} are satisfied. Then, there exists a fundamental solution for $L$ denoted by $\fundamentalsln:\Omega^\Gamma\to\RR $ with 
	$$\Omega^\Gamma=\{(x,t;\xi,\tau)\in\overline\Omega_T\times\overline\Omega_T:0\leq\tau<t\leq T \}.$$ 
	Specifically, for any fixed $(\xi,\tau)$, as a function of $(x,t)$, $\Gamma$ satisfies:
\begin{itemize}
	\item $L[\Gamma]=0\text{ on }{\Omega}\times(\tau,T]$.
	\item For every $f \in \Ccal(\overline\Omega)\cap L^\infty(\overline\Omega)$  $$\lim_{t\searrow\tau}\int_{{\Omega}}\Gamma(x,t;\xi,\tau)f(\xi)d\xi=f(x)\qquad\forall x\in\Omega.$$
\end{itemize} 
\end{lemma}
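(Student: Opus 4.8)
The plan is to reproduce, for $\Omega=\RR^n$, the classical construction of a fundamental solution by Levi's parametrix method as carried out in \cite[Chapter 1]{Friedman2008}, and to check that hypotheses \eqref{3.1.1}--\eqref{3.1.2} together with \eqref{newparabcond} are exactly what that construction requires. Fix $(\xi,\tau)$ and freeze the principal coefficients there: let the parametrix $Z(x,t;\xi,\tau)$ be the fundamental solution of the constant-coefficient operator $\sum_{i,j=1}^n\aij(\xi,\tau)\dxixj-\dt$, which is an explicit Gaussian. By uniform parabolicity \eqref{newparabcond} the matrix $[\aij(\xi,\tau)]$ has eigenvalues in $[A_{min},A_{max}]$ uniformly in $(\xi,\tau)$, so $Z$, its $x$-derivatives up to second order, and its $t$-derivative obey Gaussian bounds $|D^k_xZ|,\,|\dt Z|\leq C(t-\tau)^{-(n+k)/2}e^{-c|x-\xi|^2/(t-\tau)}$ with $C,c$ independent of $(\xi,\tau)$.

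Next I would seek $\Gamma$ in the form
\[
\Gamma(x,t;\xi,\tau)=Z(x,t;\xi,\tau)+\int_\tau^t\!\!\int_{\RR^n}Z(x,t;\eta,\sigma)\,\Phi(\eta,\sigma;\xi,\tau)\,d\eta\,d\sigma,
\]
apply $L$ formally, and obtain the Volterra integral equation $\Phi=L[Z]+\int_\tau^t\!\int_{\RR^n}L[Z]\,\Phi$ for the unknown $\Phi$. Here the H\"older condition \eqref{3.1.2} on $\aij$ (together with the boundedness and H\"older regularity of $\bi,c$ from \eqref{3.1.1}) is precisely what upgrades the naive bound on $L[Z]$ to the integrable-in-time estimate $|L[Z](x,t;\xi,\tau)|\leq C(t-\tau)^{-(n+2-\alpha)/2}e^{-c|x-\xi|^2/(t-\tau)}$ with $\alpha>0$. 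The successive approximations $\Phi_0=L[Z]$, $\Phi_{m+1}=\int_\tau^t\!\int_{\RR^n}L[Z]\,\Phi_m$ then converge absolutely, by an induction on $m$ using the Beta-function convolution identity for kernels of this form, to a solution $\Phi$ satisfying a bound of the same type.

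Finally I would verify the two asserted properties. That $\Gamma$ is twice $x$-differentiable and once $t$-differentiable with differentiation passing under the integral, and that $L[\Gamma]=0$ on $\RR^n\times(\tau,T]$, follows from the kernel estimates combined with the H\"older continuity of the coefficients — the delicate point being the near-diagonal singularity of $\dxixj$ of the volume potential, which is exactly where the H\"older modulus of $\aij$ is used. For the initial condition, $\int_{\RR^n}Z(x,t;\xi,\tau)f(\xi)\,d\xi\to f(x)$ as $t\searrow\tau$ for $f\in\Ccal(\overline\Omega)\cap L^\infty(\overline\Omega)$ is the standard Gaussian approximate-identity property (uniform continuity of $f$ on compacts together with boundedness), while the correction term is $O((t-\tau)^{\alpha/2})\to0$ by the Gaussian bounds on $Z$ and $\Phi$.

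The main obstacle is not a single deep step but the uniformity bookkeeping: obtaining all the Gaussian estimates with constants uniform in $(\xi,\tau)\in\RR^n\times[0,T]$, which is where the \emph{global} bounds in \eqref{3.1.1}--\eqref{newparabcond} (rather than merely local hypotheses) are essential on the unbounded domain, and rigorously justifying differentiation under the integral sign for the volume potential. Since all of this is carried out in full in \cite[Chapter 1]{Friedman2008} for $\Omega=\RR^n$ under hypotheses equivalent to ours, I would present the construction and cite that reference for the technical estimates, remarking only on the correspondence of hypotheses and on the fact that \eqref{3.1.1} supplies the continuous extension of the coefficients to $\boundary$ that the theory requires.
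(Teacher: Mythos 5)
Your proposal is correct and ultimately rests on the same foundation as the paper: the paper's entire proof is a citation to \cite[Theorem 10, p.23]{Friedman2008}, and you reconstruct in outline the Levi parametrix argument that underlies that theorem before citing the same reference for the technical estimates. Your sketch of the parametrix construction, the Volterra equation for $\Phi$, and the role of \eqref{3.1.1}--\eqref{3.1.2} in securing the uniform Gaussian bounds is an accurate account of Friedman's proof and a helpful gloss on what the citation is carrying.
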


\begin{proof}
	See \cite[Theorem 10, p.23]{Friedman2008}.
\end{proof}
To proceed, we also require the existence of a fundamental solution for the adjoint operator of $L$, denoted by $L^*:C^{2,1}(\Omega_T)\to\RR(\Omega_T)$. To define $L^*$, we require that the coefficients of $L$ also satisfy
\begin{align}\label{3.1.4}
	\partial_{x_kx_l}\aij,\partial_{x_k}a_{ij},\partial_{x_k}\bi\in L^\infty(\overline\Omega_T)\cap H_\alpha(\overline\Omega_T).
\end{align}
Specifically, the adjoint operator of $L$ is given by
\begin{align}\label{adjointeqn}
		L^*[v]=\sum_{i,j=1}^n\dxixj(\aij v)-\sum_{i=1}^n\dxi(\bi v)+c v+\dt v\qquad\text{on } \Omega_T,
	\end{align}
for all $v\in\Ccal^{2,1}(\Omega_T)$.
We can now state

\begin{lemma}\label{thmforgammastar}
	Let $L$ be the operator defined as in \eqref{friedparabeqn} and  suppose that \eqref{3.1.1}-\eqref{3.1.2} and \eqref{3.1.4} are satisfied. 
		Then, there exists a fundamental solution for $L^*$, denoted by $\Gamma^*:\Omega^{\Gamma^*}\to\RR$ with 
		$$\Omega^{\Gamma^*}=\{ (x,t;\xi,\tau)\in\overline\Omega_T\times\overline\Omega_T:0\leq t<\tau\leq T \}.$$ Specifically, for any fixed $(\xi,\tau)$, as a function of $(x,t)$, $\Gamma^*$ satisfies:
\begin{itemize}
	\item $L^*[\Gamma^*]=0\text{ on }{\Omega}\times (0,\tau)$.
	\item For every $f \in \Ccal(\Omega)\cap L^\infty(\Omega)$
	 $$\lim_{t\nearrow\tau}\int_{{\Omega}}\Gamma^*(x,t;\xi,\tau)f(\xi)d\xi=f(x)\qquad\forall x\in\Omega.$$
\end{itemize} 
 Additionally,
 \begin{align}
	\label{gamma=gammastar} 	\Gamma(x,t;\xi,\tau)=\Gamma^*(\xi,\tau;x,t)\qquad\forall(x,t;\xi,\tau)\in\Omega^\Gamma.
 \end{align}
\end{lemma}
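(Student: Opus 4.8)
The plan is to establish Lemma \ref{thmforgammastar} by adapting the construction of fundamental solutions in \cite[Chapter 1]{Friedman2008} — the same machinery underlying Lemma \ref{first fundamental lemma} — applied to the operator $L^*$ rather than $L$. The first step is to rewrite $L^*$ in non-divergence form: expanding the derivatives in \eqref{adjointeqn} using $a_{ij}=a_{ji}$ gives $L^*[v]=\sum_{i,j=1}^n\aij\dxixj v+\sum_{i=1}^n\tilde b_i\dxi v+\tilde c\,v+\dt v$, where $\tilde b_i=2\sum_{j=1}^n\dxj\aij-\bi$ and $\tilde c=\sum_{i,j=1}^n\partial_{x_ix_j}\aij-\sum_{i=1}^n\dxi\bi+c$. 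The key observation is that \eqref{3.1.1}, \eqref{3.1.2} and the extra regularity \eqref{3.1.4} are precisely what is needed to guarantee that $\tilde b_i,\tilde c\in L^\infty(\overline\Omega_T)\cap H_\alpha(\overline\Omega_T)$ and that $\tilde b_i$ additionally satisfies a H\"older-in-time estimate of the form \eqref{3.1.2} (indeed $\dxj\aij$ inherits spatial H\"older continuity from \eqref{3.1.4}, and its H\"older continuity in time follows from $\dt\aij$ considerations or, more simply, one only needs $\tilde b_i\in H_\alpha$ and bounded for Friedman's hypotheses). The second step is a change of time variable: setting $s=T-t$ (or $s=-t$) turns the backward parabolic operator $L^*$, which evolves for $t<\tau$, into a forward parabolic operator in $s$ whose coefficients still satisfy the hypotheses of Lemma \ref{first fundamental lemma} (parabolicity \eqref{newparabcond} is unchanged since the principal part is untouched). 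Applying Lemma \ref{first fundamental lemma} (equivalently \cite[Theorem 10, p.23]{Friedman2008}) to this time-reversed operator yields a fundamental solution, which upon reversing the time substitution becomes the desired $\Gamma^*$ on $\Omega^{\Gamma^*}$, with the stated annihilation property $L^*[\Gamma^*]=0$ on $\Omega\times(0,\tau)$ and the delta-convergence as $t\nearrow\tau$.

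The third and substantive step is the duality identity \eqref{gamma=gammastar}. The standard route is a Green's-identity argument: for fixed $(\xi,\tau)$ and $(x_0,t_0)$ with $t_0<\tau$, one integrates the identity $\Gamma^*(y,s;\xi,\tau)\,L_{(y,s)}[\Gamma(y,s;x_0,t_0)] - \Gamma(y,s;x_0,t_0)\,L^*_{(y,s)}[\Gamma^*(y,s;\xi,\tau)]$ over $y\in\Omega$ and $s\in(t_0+\delta,\tau-\delta)$. Both terms in the integrand vanish identically — the first because $L[\Gamma(\cdot,\cdot;x_0,t_0)]=0$ for $s>t_0$, the second because $L^*[\Gamma^*(\cdot,\cdot;\xi,\tau)]=0$ for $s<\tau$ — so the space-time integral is zero. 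On the other hand, integrating the difference of the two operators against the two kernels produces, after integration by parts in the spatial variables (the principal and first-order terms combine into a divergence, since $L^*$ is by construction the formal adjoint of $L$) and integration by parts in time (the two $\pm\dt$ terms combine into $\dt(\Gamma\Gamma^*)$), only boundary contributions: spatial boundary terms at infinity that vanish by the Gaussian decay estimates on $\Gamma,\Gamma^*$ and their first derivatives (\cite[Chapter 1]{Friedman2008}), and temporal boundary terms at $s=t_0+\delta$ and $s=\tau-\delta$. Letting $\delta\searrow0$ and invoking the delta-convergence properties from Lemma \ref{first fundamental lemma} and the first part of this lemma, the term at $s\searrow t_0$ collapses to $\Gamma^*(x_0,t_0;\xi,\tau)$ and the term at $s\nearrow\tau$ collapses to $\Gamma(\xi,\tau;x_0,t_0)$, yielding $\Gamma(\xi,\tau;x_0,t_0)=\Gamma^*(x_0,t_0;\xi,\tau)$; relabelling gives \eqref{gamma=gammastar}.

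I expect the main obstacle to be the justification of the boundary-term analysis in the Green's identity on the \emph{unbounded} domain $\Omega=\RR^n$: one must exhibit explicit Gaussian upper bounds of the form $|\Gamma(x,t;\xi,\tau)|\le c(t-\tau)^{-n/2}\exp(-C|x-\xi|^2/(t-\tau))$, and analogous bounds for $\nabla_x\Gamma$ with an extra factor $(t-\tau)^{-1/2}$, both for $\Gamma$ and $\Gamma^*$, in order to show that the flux integrals over $\partial B^R$ tend to zero as $R\to\infty$ uniformly for $s$ in the relevant compact time interval. These estimates are available in \cite[Chapter 1]{Friedman2008} as part of the construction, so the argument is a matter of citing and assembling them correctly rather than proving anything genuinely new; nonetheless care is needed because the decay must be integrated against the decay of the second kernel and the singularities at $s=t_0$ and $s=\tau$ must be handled by the $\delta$-regularisation. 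A secondary technical point worth flagging is verifying that $\tilde b_i$ meets Friedman's H\"older-in-$(x,t)$ hypothesis; if \eqref{3.1.4} as stated does not quite deliver time-H\"older continuity of $\dxj\aij$, one either strengthens \eqref{3.1.4} accordingly or appeals to the weaker hypotheses in \cite[p.356--414]{Ladyzenskaja1968}, which is already cited for exactly this purpose.
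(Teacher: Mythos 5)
The paper's proof of this lemma is a one-line citation to \cite[Theorems 14 and 15, p.27--28]{Friedman2008}, and your proposal correctly reconstructs the argument behind those theorems: existence of $\Gamma^*$ via rewriting $L^*$ in non-divergence form plus a time reversal to invoke the construction underlying Lemma~\ref{first fundamental lemma}, and the duality \eqref{gamma=gammastar} via Green's identity over $\Omega\times(t_0+\delta,\tau-\delta)$ with the Gaussian bounds handling the flux at spatial infinity and the delta-convergence as $\delta\searrow 0$. Your flagged concern about time-H\"older continuity of $\tilde b_i$ is unfounded: Friedman's hypotheses require the joint $(x,t)$-H\"older condition \eqref{3.1.2} only of $a_{ij}$, while $b_i$ and $c$ need only bounded, spatially H\"older coefficients, which \eqref{3.1.4} supplies for $\tilde b_i$ and $\tilde c$.
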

\begin{proof}
	See \cite[Theorems 14 and 15, p.27-28]{Friedman2008}.
\end{proof}

We also require the following qualitative properties of $\Gamma$ and $\Gamma^*$, stated in
\begin{lemma}\label{bound of gamma-gammastar}
	For all $(x,t;\xi,\tau)\in\Omega^\Gamma$, $\Gamma$ satisfies:
	\begin{align}\label{gammabound1}
		0<\Gamma(x, t ; \xi, \tau) &\leq\kappa{}(t-\tau)^{-\tfrac{n}{2}} \exp \left(-\frac{\lambda|x-\xi|^{2}}{4(t-\tau)}\right),
		\\ 
		\label{gammabound2}
		|\partial_{x_i} \Gamma(x, t ; \xi, \tau) | & \leq\kappa{}(t-\tau)^{-\tfrac{(n+1)}{2}} \exp \left(-\frac{\lambda | x-\xi|^{2}}{4(t-\tau)}\right),
	\end{align}
	and for all $(x,t;\xi,\tau)\in\Omega^{\Gamma^*}$, $\Gamma^*$ satisfies:
	\begin{align}\label{bound on gammastar}
		0<\Gamma^{*}(x, t ; \xi, \tau) &\leq\kappa{}(\tau-t)^{-\tfrac{n}{2}} \exp \left(-\frac{\lambda | x-\xi|^{ 2}}{4(\tau-t)}\right),
		\\ 
		\label{gammastarbound4}
		|\dxi\Gamma^{*}(x, t ; \xi, \tau) | &\leq\kappa{}(\tau-t)^{-\tfrac{(n+1)}{2}} \exp \left(-\frac{\lambda |x-\xi|^{2}}{4(\tau-t)}\right),
	\end{align}
	for some constants $\kappa\in(0,\infty)$ and $\lambda\in(0,A_{min})$.
\end{lemma}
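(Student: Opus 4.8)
The plan is to reduce all four estimates to the Levi parametrix construction of fundamental solutions carried out in \cite[Chapter 1]{Friedman2008}, whose hypotheses are precisely \eqref{3.1.1}--\eqref{3.1.2} for $\Gamma$ (and \eqref{3.1.1}--\eqref{3.1.2} together with \eqref{3.1.4} for the adjoint). Recall that $\Gamma$ is built as $\Gamma=Z+Z\ast\Phi$, where $Z(x,t;\xi,\tau)$ is the parametrix obtained by freezing the principal coefficients at $(\xi,\tau)$ --- a Gaussian whose exponent is $\langle A(\xi,\tau)^{-1}(x-\xi),x-\xi\rangle/(4(t-\tau))$, hence bounded below in terms of the spectral bounds of $(a_{ij})$ coming from \eqref{newparabcond} --- and $\Phi$ solves a Volterra equation whose kernel inherits a Gaussian bound with a singularity $(t-\tau)^{-1+\alpha/2}$ that is integrable in time thanks to the H\"older condition \eqref{3.1.2}. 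Direct differentiation shows that $|\partial_{x_i}Z|$ carries the extra factor $|x-\xi|/(t-\tau)$, which is absorbed into $(t-\tau)^{-1/2}$ at the price of a slightly smaller Gaussian constant via the elementary inequality $s^{1/2}e^{-s}\le C_\delta e^{-(1-\delta)s}$; convolving with $\Phi$ preserves the form of the bounds because of the Chapman--Kolmogorov-type identity $\int_{\RR^n}(t-\sigma)^{-n/2}e^{-\lambda|x-y|^2/(4(t-\sigma))}(\sigma-\tau)^{-n/2}e^{-\lambda|y-\xi|^2/(4(\sigma-\tau))}\,dy=c_\lambda(t-\tau)^{-n/2}e^{-\lambda|x-\xi|^2/(4(t-\tau))}$ together with the convergence of the residual time integral. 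This gives \eqref{gammabound1}--\eqref{gammabound2}; these are \cite[Chapter 1, (6.12)--(6.13), Theorem~11]{Friedman2008}. The strict positivity $\Gamma>0$ is separate, since the correction $Z\ast\Phi$ is not sign-definite: I would obtain it by applying the strong minimum principle to $\Gamma$ as a solution of $L[\Gamma]=0$ on $\Omega\times(\tau,T]$ with the non-negative $\delta$-type data at $t=\tau$ supplied by Lemma \ref{first fundamental lemma} (see \cite[Chapter 2]{Friedman2008}).

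For \eqref{bound on gammastar}--\eqref{gammastarbound4} I would note that, once \eqref{3.1.4} holds, expanding the derivatives in \eqref{adjointeqn} exhibits $L^*$ as a second order parabolic operator of exactly the same structure --- principal part $\sum_{i,j}a_{ij}\partial_{x_ix_j}$, first order coefficients built from $b_i$ and $\partial_{x_k}a_{ij}$, zeroth order coefficient built from $c$, $\partial_{x_k}b_i$ and $\partial_{x_kx_l}a_{ij}$, all lying in $L^\infty(\overline\Omega_T)\cap H_\alpha(\overline\Omega_T)$ by \eqref{3.1.1} and \eqref{3.1.4} --- differing only by the sign of $\dt$. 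Hence $\Gamma^*$ is the backward fundamental solution delivered by the very same construction after the time reversal $t\mapsto T-t$, and the estimates of the first paragraph apply verbatim. (Alternatively, the duality identity \eqref{gamma=gammastar} transfers \eqref{gammabound1} directly to \eqref{bound on gammastar}, but deducing \eqref{gammastarbound4} this way would require an estimate on $\partial_{\xi_i}\Gamma$, which itself relies on \eqref{3.1.4}, so nothing is saved.) Finally I would take $\kappa$ to be the largest and $\lambda$ the smallest of the constants produced in the four cases, so that a single pair $(\kappa,\lambda)$ with $\lambda\in(0,A_{\min})$ serves all of \eqref{gammabound1}--\eqref{gammastarbound4}.

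The only genuine difficulty is bookkeeping: tracking how the Gaussian constant degrades through each parametrix iteration and each convolution, and checking that it can still be kept in $(0,A_{\min})$. This is routine given the inequality $s^{m}e^{-s}\le C_{m,\delta}e^{-(1-\delta)s}$ and the convolution identity above, and is performed in full in \cite[Chapter 1]{Friedman2008}; for the purposes of this paper it suffices to quote those results, which is the content of the proof.
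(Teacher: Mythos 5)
Your proposal is correct and takes essentially the same route as the paper's proof, which consists of quoting the Gaussian bounds from the Levi parametrix construction in \cite[p.24, p.28]{Friedman2008} and the positivity of $\Gamma$ from \cite[Chapter 2, Theorem 11]{Friedman2008} (with the duality identity \eqref{gamma=gammastar} transferring it to $\Gamma^*$). You simply unfold what lies behind those citations, and your time-reversal description of the $L^*$ estimates is an equivalent alternative to the paper's use of \eqref{gamma=gammastar}.
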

\begin{proof}
	For the upper bounds in \eqref{gammabound1}-\eqref{gammastarbound4} see \cite[p.24 and p.28]{Friedman2008}. For the positivity of $\Gamma$ and $\Gamma^*$ see \cite[Theorem 11, p.44]{Friedman2008} and \eqref{gamma=gammastar}.
\end{proof}

We now establish a comparison principle for the nonlinear non-local integro-differential operator $P$ in \eqref{eqPcal}, which compliments that in Theorem \ref{nonlinear comparison thm}, namely

\begin{theorem}[Comparison Principle]\label{Comparison Principle}
Let $\overline u,\underline u\in \Ccal(\overline\Omega_T)\cap\Ccal^{2,1}({\Omega_T})\cap L^\infty(\overline\Omega_T) $ and $P$ be an operator as in \eqref{eqPcal}-\eqref{phiinL1} with $\mathcal{K}$ as in \eqref{defmathcalK}. 
Suppose that: the coefficients $\aij,\bi:\overline\Omega_T\to\RR$ satisfy \eqref{3.1.1}-\eqref{3.1.2} and \eqref{3.1.4}; 
$f$ satisfies \eqref{thmass6} and
\begin{equation}
\label{cond on deriv of f1}
f(x,t,u,Ju) \text{ satisfies Lipschitz conditions in } u \text{ and }Ju \text{ on } \Omega_T\times \mathcal{K};
\end{equation}
and $\phi$ satisfies \eqref{phiinL1}.
Furthermore, suppose 
\begin{equation}
\label{3.13}	
P[\overline u] \leq P[\underline u]\qquad\text{on }{\Omega_T}
\end{equation}
and
\begin{equation}
\label{3.14}	\overline u \geq\underline u\qquad\text{on }\boundary.
\end{equation}
Then, $\overline u\geq \underline u$ on $\overline\Omega_T$.
\end{theorem}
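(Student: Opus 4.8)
The plan is to reduce the nonlinear non-local inequality satisfied by $w=\overline u-\underline u$ to a scalar integral inequality amenable to the Bellman--Gr\"onwall inequality, using the fundamental solution $\Gamma^*$ of the adjoint operator $L^*$. First I would set $w=\overline u-\underline u$, which lies in $\Ccal(\overline\Omega_T)\cap\Ccal^{2,1}(\Omega_T)\cap L^\infty(\overline\Omega_T)$, satisfies $w\geq0$ on $\boundary$, and by \eqref{3.13} obeys $L[w]+\big(f(\cdot,\overline u,J\overline u)-f(\cdot,\underline u,J\underline u)\big)\leq0$ on $\Omega_T$ for the operator $L$ in \eqref{friedparabeqn} with the coefficients $\aij,\bi$ given and $c\equiv0$. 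Exactly as in the proof of Theorem \ref{nonlinear comparison thm}, using \eqref{cond on deriv of f1} I would write $f(\cdot,\overline u,J\overline u)-f(\cdot,\underline u,J\underline u)=cw+dJw$ on $\Omega_T$ for bounded $c$ (with $\|c\|_\infty\le k_{\mathcal K}$) and, by \eqref{thmass6} together with \eqref{cond on deriv of f1}, for $d$ with $0\le d\le k_{\mathcal K}$ on $\Omega_T$; absorbing the term $cw$ into $L$ gives a linear operator $\widetilde L$ of the form \eqref{friedparabeqn} whose coefficients still satisfy \eqref{3.1.1}--\eqref{3.1.2} and \eqref{3.1.4} (adding the bounded H\"older function $c$ to the zero-order coefficient preserves these). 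Hence $\widetilde L[w]\leq -dJw$ on $\Omega_T$, and since $d\ge0$, $\phi\ge0$, the sign of $Jw$ is the only remaining obstruction.

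The core step is a representation formula: fix $(\xi,\tau)\in\Omega\times(0,T]$, let $\Gamma^*$ be the fundamental solution for $\widetilde L^{*}$ from Lemma \ref{thmforgammastar}, and integrate the Green-type identity $\Gamma^*\widetilde L[w]-w\,\widetilde L^{*}[\Gamma^*]=\text{(divergence terms)}$ over $\Omega\times(\epsilon,\tau)$ for small $\epsilon>0$. Using $\widetilde L^{*}[\Gamma^*]=0$ on $\Omega\times(0,\tau)$, the delta-initial property $\lim_{t\nearrow\tau}\int_\Omega \Gamma^*(x,t;\xi,\tau)w(x,t)dx=w(\xi,\tau)$, the Gaussian bounds \eqref{bound on gammastar}--\eqref{gammastarbound4} (which make all spatial integrals over $\RR^n$ absolutely convergent and kill the boundary-at-infinity flux terms, since $w,\nabla w$ are controlled — here one uses that $w$ is bounded and that interior parabolic estimates bound $\nabla w$ locally, while the Gaussian decay dominates), and the facts $w\ge0$ on $\Omega\times\{0\}$ and $\Gamma^*>0$, I obtain, after letting $\epsilon\to0$,
\begin{equation*}
w(\xi,\tau)\ \geq\ -\int_0^\tau\!\!\int_{\Omega}\Gamma^*(x,t;\xi,\tau)\,d(x,t)\,Jw(x,t)\,dx\,dt .
\end{equation*}
Now I would argue by the sign of the negative part. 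Let $m(\tau)=\inf_{x\in\Omega}\inf_{t\in[0,\tau]} \min\{w(x,t),0\}$, equivalently work with $W(\tau)=\sup_{\Omega\times[0,\tau]}w^-=\sup_{\Omega\times[0,\tau]}\max\{-w,0\}$, which is finite since $w\in L^\infty$, non-decreasing, and $W(0)=0$. Since $\phi\ge0$ and $\|\phi\|_1<\infty$, $Jw(x,t)\ge -\|\phi\|_1 W(t)\ge -\|\phi\|_1 W(\tau)$; combined with $0\le d\le k_{\mathcal K}$ and $\int_\Omega \Gamma^*(x,t;\xi,\tau)dx\le 1$ (a standard consequence of \eqref{gamma=gammastar} and the fact that $\int_\Omega\Gamma(x,t;\xi,\tau)d\xi\le1$ for $\widetilde L$ with nonpositive zero-order coefficient — or, more robustly, $\le e^{\|c^+\|_\infty(t-\tau)}\le e^{k_{\mathcal K}T}$, which only changes constants), the inequality above yields
\begin{equation*}
w(\xi,\tau)\ \geq\ -k_{\mathcal K}\,e^{k_{\mathcal K}T}\,\|\phi\|_1\int_0^\tau W(t)\,dt \qquad\forall(\xi,\tau)\in\Omega\times(0,T].
\end{equation*}
Taking the supremum over $\xi$ of the negative part gives $W(\tau)\le C\int_0^\tau W(t)\,dt$ with $C=k_{\mathcal K}e^{k_{\mathcal K}T}\|\phi\|_1$, and since $W$ is continuous (hence locally bounded) with $W(0)=0$, the Bellman--Gr\"onwall inequality forces $W\equiv0$ on $[0,T]$. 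Thus $w\ge0$, i.e. $\overline u\ge\underline u$ on $\overline\Omega_T$, as required.

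\textbf{Main obstacle.} The delicate part is making the integration-by-parts/representation step rigorous on the \emph{unbounded} domain $\Omega=\RR^n$: one must justify that the flux terms over $\partial B^R\times(\epsilon,\tau)$ vanish as $R\to\infty$ and that the time-boundary integrals converge as $\epsilon\to0$. This rests on (i) the Gaussian upper bounds for $\Gamma^*$ and $\partial_{x_i}\Gamma^*$ in Lemma \ref{bound of gamma-gammastar}, (ii) boundedness of $w$ on $\overline\Omega_T$, and (iii) a local bound on $|\nabla w|$ on $\Omega\times(\epsilon,\tau)$ coming from interior Schauder/parabolic estimates for $\widetilde L[w]\in L^\infty_{\mathrm{loc}}$; one has to check these combine so that $\int_{\partial B^R}|\Gamma^*\nabla w - w\nabla\Gamma^*|\to0$, which works because the exponential $e^{-\lambda R^2/(4(\tau-t))}$ beats any polynomial growth the gradient term could conceivably have — but if $|\nabla w|$ is only known to be \emph{locally} bounded without a global growth bound, one should instead first derive a global-in-space bound on $|\nabla w|$ with at most polynomial growth (again via interior estimates applied on unit balls, using $\|w\|_\infty$ and $\|\widetilde L[w]\|_\infty$ being uniformly bounded), or, alternatively, multiply by a cutoff and track the error, which is the cleaner route. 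A secondary technical point is the precise constant in $\int_\Omega\Gamma^*(x,t;\xi,\tau)\,dx$; I would either cite the corresponding statement for $\Gamma$ from \cite{Friedman2008} and transfer it via \eqref{gamma=gammastar}, or simply carry the harmless factor $e^{k_{\mathcal K}T}$ through as above.
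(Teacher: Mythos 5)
Your overall strategy — reduce to an integral inequality via the adjoint fundamental solution, then apply Bellman--Gr\"onwall — matches the paper's, and your Gr\"onwall bookkeeping at the end is essentially sound. However there is a genuine gap at the step where you ``absorb the term $cw$ into $L$'' to form $\widetilde L$. You assert that $c$ (and hence the zero-order coefficient of $\widetilde L$) is a ``bounded H\"older function'', but that is not justified: $c$ is the difference quotient $[f(\cdot,\overline u,J\overline u)-f(\cdot,\underline u,J\overline u)]/(\overline u-\underline u)$ evaluated along $\overline u,\underline u$, and since $\overline u,\underline u, J\overline u$ are only assumed continuous (not H\"older), and \eqref{cond on deriv of f1} is only a Lipschitz condition, the resulting $c$ is merely $L^\infty$. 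Without $c\in H_\alpha(\overline\Omega_T)$, condition \eqref{3.1.1} fails for $\widetilde L$ and you cannot invoke Lemma \ref{thmforgammastar} to get a fundamental solution $\Gamma^*$ for $\widetilde L^*$, so the representation formula you write down does not exist. Note also that if you decline to absorb $c$ and instead keep it as a forcing term $cw$, your subsequent Gr\"onwall step breaks: with $c$ of arbitrary sign you cannot replace $-w$ by its positive part on the right-hand side of the resulting integral inequality, since $c(-w)$ need not be dominated by $\|c\|_\infty\max\{-w,0\}$ where $c<0$ and $w>0$.

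The paper resolves both issues simultaneously by working with $w=(\overline u-\underline u)e^{kt}$ for $k>0$ large. Differentiating produces an extra $+kw$ on the left, so that when one writes $(f(\cdot,\overline u,J\overline u)-f(\cdot,\underline u,J\underline u))e^{kt}+kw=cw+dJw$, the shift by $k$ makes $c\ge 0$ (and $d\ge0$ by \eqref{thmass6}); meanwhile $L$ in \eqref{L for the comparison principle} is kept with \emph{zero} zero-order coefficient, so its fundamental solution exists under exactly the hypotheses \eqref{3.1.1}--\eqref{3.1.2}, \eqref{3.1.4} as stated, with no H\"older requirement on $c$. The non-negativity of $c$ and $d$ (together with $\Gamma^*>0$) then makes the passage from $-w$ to $\max\{-w,0\}$ in the integral inequality legitimate. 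For the ``main obstacle'' you identify, the paper takes the cutoff route you mention as the alternative (introducing $\Gamma^*_R=\Gamma^*H_R(\cdot-x)$ and estimating the $\Lambda$-error with the Gaussian bounds), which avoids any need for a global gradient estimate on $w$; your sketch via interior Schauder estimates would additionally require H\"older regularity of $-dJw$, which is again unavailable, so the cutoff is not merely cleaner but effectively necessary here.
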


\begin{proof}
	Define  $w:\overline\Omega_T\to\RR$ to be 
	\begin{align*}
		w=(\overline u-\underline u)\theta\qquad\text{on }\overline\Omega_T,
	\end{align*}
	with $\theta:[0,T]\to\RR_+$ given by
	$$\theta(t)=e^{kt}\qquad\forall t\in[0,T]$$ for a positive constant $k$ to be specified. It follows immediately from \eqref{3.14} that
	\begin{align}\label{condition on w at boundary}
		w\geq0 \qquad\text{on }\partial\Omega_T.
	\end{align}
Via \eqref{eqPcal} and \eqref{3.13}, $w$ satisfies
	\begin{align}\label{primitiveDI}
		\sum_{i,j=1}^n\aij \dxixj w + \sum_{i=1}^n\bi\dxi w+(f(\cdot,\overline u,J\overline u)-f(\cdot, \underline u, J\underline u))\theta+kw-\dt w\leq 0 \qquad\text{on }{\Omega_T}.
	\end{align}
As in \eqref{proteleutaia?}-\eqref{JMEd}, it follows from \eqref{cond on deriv of f1} and \eqref{thmass6}, that there exist functions $c,d\in L^\infty({\Omega_T})$ such that 
	\begin{align}
	\label{3.16-}	(f(\cdot,\overline u,J\overline u)-f(\cdot,\underline u,J\underline u))\theta+kw=c w+d Jw\qquad\text{on }{\Omega_T},
	\end{align}
with $k$ chosen sufficiently large, so that 
	\begin{align}
		\label{3.19-}	c,d\geq0\qquad\text{on }\Omega_T.
	\end{align}
	Therefore, we may rewrite \eqref{primitiveDI} as
	\begin{align}\label{DIwith w}
		L[w]+cw+dJw\leq0\qquad\text{on }{\Omega_T},
	\end{align}
	with $L$ denoting the second order linear parabolic partial differential operator given by
	\begin{align}\label{L for the comparison principle}
		L[w]=\sum_{i,j=1}^n\aij \dxixj w + \sum_{i=1}^n\bi\dxi w-\partial_t w\qquad\text{on }\Omega_T.
	\end{align}
%
Now, for $R\in\NN$  we define $\Gamma^*_R:\Omega^{\Gamma^*}\to\RR$ to be
\newcommand{\GammaR}{\Gamma^*_R(y,s;x,t)}
\newcommand{\Gammastar}{\Gamma^*(y,s;x,t)}
\begin{align}\label{definition of GammaR}
	\GammaR=\Gammastar H_R(y-x)\qquad\forall(y,s;x,t)\in \Omega^{\Gamma^*},
\end{align}
	with $\Gamma^*$ the fundamental solution for the adjoint operator of $L$ in \eqref{L for the comparison principle}. The existence of $\Gamma^*$ is guaranteed since the conditions of Lemma \ref{thmforgammastar} are satisfied. The function $H_R\in\Ccal^2(\Omega)$ satisfies the following properties:
	
	\begin{align} 
		\label{def H1}	&H_R(y)=\begin{cases}
				1,\quad y\in B^R;\\
				0,\quad y\in{\Omega}\setminus B^{R+1};
			\end{cases}\\ \label{def H2}
		&	0\leq H_R(y)\leq 1\qquad\forall y\in{\Omega};\\ 
		\label{derivatives of H bounded}	&\sum_{i=1}^n|\partial_{y_i} H_R|+\sum_{i,j=1}^n |\partial_{y_iy_j} H_R|\leq M\quad\text{on }B^{R+1},
	\end{align}
	for a constant $M\geq0$ independent of $R\in\NN$.
Upon multiplying \eqref{DIwith w} evaluated at $(y,s)$, with $\GammaR$, and integrating over ${\Omega}\times[\e_1,t-\e_2]$ with $\epsilon_1,\epsilon_2 \in (0,t/2)$, we obtain
	\newcommand{\intint}{\int_{\e_1}^{t-\e_2}\int_{{B_x^{R+1}}}}
	\newcommand{\intargument}{(y,s;x,t)}
	\begin{align}
		\notag              0&\geq \intint \Gamma^*_R(y,s;x,t)(L[w]+cw+dJw)(y,s)dyds\\ 
		\label{Lw integral}  &=\intint \Gamma^*_R \intargument L[w](y,s) dyds \\
		\label{Lw integral2}&\qquad+\intint \Gamma^*_R(y,s;x,t)(cw+dJw)(y,s)dyds.
	\end{align}
Via the Green's identity for $L$ and $L^*$ \cite[p.27]{Friedman2008}, the divergence theorem, and Lemma \ref{thmforgammastar}, the integral of \eqref{Lw integral} is given by
\renewcommand{\dxixj}{\partial_{y_iy_j}}
\renewcommand{\dxi}{\partial_{y_i}}
\renewcommand{\dt}{\partial_s}
\begin{align}
	\notag &\intint \Gamma^*_R\intargument L[w](y,s) dyds 
	\\
	 = &\intint  (wL^*[\Gamma^*_R]-\partial_s (w\Gamma_R^*))(y,s;x,t)dyds  
	\\
	\begin{split}
		\label{:(} 
		&\,+\intint\left(\sum_{i=1}^n\partial_{y_i} \sum_{j=1}^n \left[ \Gamma_R^*\aij(\partial_{y_j}w)-w\aij(\partial_{y_j}\Gamma_R^*)-w\Gamma_R^*(\partial_{y_j}\aij) \right]\right)(y,s;x,t)dyds 
		\\
		&\quad+\intint\left(\sum_{i=1}^n\partial_{y_i}(\bi w \Gamma_R^*)\right)(y,s;x,t)dyds
	\end{split}
	\\
	\label{3.28*} 
	= &\intint \Lambda(y,s;x,t)w(y,s)dyds -\left.\int_{B_x^{R+1}}\Gamma^*_R(y,s;x,t) w(y,s) dy\right|_{s=\e_1}^{t-\e_2},
\end{align}
where, via the divergence theorem and \eqref{def H1}-\eqref{derivatives of H bounded}, the integrals in \eqref{:(}  vanish for all $(x,t)\in\Omega_T$ and $\Lambda:\Omega_t\to\RR$ is given by
\begin{align}
	\label{3.25+} \begin{split}
 \Lambda(\cdot,\cdot;x,t)=\sum_{i,j=1}^n( \partial_{y_j}(\aij\Gamma^*)\partial_{y_i}H_R+ \partial_{y_i}(\aij\Gamma^*)\partial_{y_j}H_R )\\+\sum_{i,j=1}^n\aij\Gamma^*(\dxixj H_R)-\sum_{i=1}^n\bi\Gamma^*(\dxi H_R),	
 \end{split}
\end{align}
 on $\Omega_t$ for all $(x,t)\in\Omega_T$. Via \eqref{def H1}-\eqref{def H2} we have
\newcommand{\newintint}{\int_0^{t-\e_2}\int_{B_x^{R+1}\setminus B_x^R}}
\begin{align*}
	\intint \Lambda(y,s;x,t)w(y,s)dyds&=\int_{\e_1}^{t-\e_2}\int_{B_x^{R+1}\setminus B_x^R} \Lambda(y,s;x,t)w(y,s)dyds.
\end{align*}   
Using \eqref{3.1.1}-\eqref{3.1.2}, \eqref{3.1.4}, \eqref{derivatives of H bounded}, \eqref{3.25+} and Lemma \ref{bound of gamma-gammastar}, it follows that there exists a sufficiently large constant $C>0$ which is independent of $\epsilon_1$, such that
\renewcommand{\intint}{\int_{\e_1}^{t-\e_2}\int_{{\Omega}}}
\renewcommand{\newintint}{\int_{0}^{t-\e_2}\int_{B_x^{R+1}\setminus B_x^R}}
\begin{align}
	\notag	
	& \abs{ \int_{\epsilon_1}^{t-\e_2}\int_{B_x^{R+1}\setminus B_x^R}\Lambda(y,s;x,t) w(y,s) dyds} 
	\\
	\notag	
	& \leq ||w||_\infty \int_{\epsilon_1}^{t-\e_2}\int_{B_x^{R+1}\setminus B_x^R}\abs{\Lambda(y,s;x,t)} dyds 
	\\
	\notag	
	&\leq M||w||_\infty \int_{\epsilon_1}^{t-\e_2}\int_{B_x^{R+1}\setminus B_x^R}\abs{\sum_{i,j=1}^n\partial_{y_j}(\aij(y,s)\Gamma^*(y,s;x,t))+\partial_{y_i}(\aij(y,s)\Gamma^*(y,s;x,t))}
	\\ 
	\notag	
	&\qquad\qquad\qquad\qquad\qquad\quad \quad \quad+\left|\sum_{i,j=1}^n\aij(y,s)\Gamma^*(y,s;x,t)\right|+\left|\sum_{i_{{}_{}}=1}^n\bi(y,s)\Gamma^*(y,s;x,t)\right|dyds
	\\
	\label{tobedead}	
	&\leq C \newintint\abs{\sum_{i=1}^n\dxi\Gamma^*(y,s;x,t)}+\abs{\Gamma^*(y,s;x,t)}dyds.
\end{align}
Via Lemma \ref{bound of gamma-gammastar}, Fubini-Tonelli Theorem and several changes of variables we obtain
\begin{align}
	\notag 
	&\newintint\abs{\sum_{i=1}^n\dxi\Gamma^*(y,s;x,t)}+\abs{\Gamma^*(y,s;x,t)}dyds \\\notag &\leq\kappa\newintint n(t-s)^{-\frac{n+1}{2}}\exp(-\frac{\lambda|x-y|^2}{4(t-s)})+(t-s)^{-\frac{n}{2}}\exp(-\frac{\lambda|x-y|^2}{4(t-s)})dyds 
	\\
	\notag 
	&=\kappa\int_{\e_2}^t\int_{B_x^{R+1}\setminus B_x^R}\left(1+\frac{n}{\sqrt\tau} \right) \frac{1}{\tau^{\frac n2} }\exp(-\frac{\lambda|x-y|^2}{4\tau})dyd\tau 
	\\
	\notag	
	&= \frac{\kappa |s_n|}{2}\left(\frac{4}{\lambda}\right)^{\frac n2}\int_{\epsilon_2}^t\left(1+\frac{n}{\sqrt\tau}\right)d\tau\int_{\frac{\lambda R^2}{4t}}^{\frac{\lambda (R+1)^2}{4t}} e^{-z} z^{\frac n2-1}dz
	\\
	\label{3.26++}
	&< \frac{\kappa |s_n|}{2}\left(\frac{4}{\lambda}\right)^{\frac n 2}\left(T+2n\sqrt T\right)\Gamma_{up}\left(\frac{n}{2},\frac{\lambda R^2}{4T}\right)
\end{align}
where $\lambda >0$, $|s_n|$ denotes the surface area of the unit $n-$sphere and $\Gamma_{up}: \mathbb{R}_+\times \mathbb{R}\to\RR$ denotes the upper incomplete Gamma function. Upon substituting \eqref{3.26++} into \eqref{tobedead} it follows that 
\begin{align}
	\label{3.30*}	\int_{\epsilon_1}^{t-\e_2}\int_{{B_x^{R+1}\setminus {B_x^R}}}\Lambda\intargument w(y,s) dyds\to0\quad\text{as }R\to\infty,
\end{align}
uniformly for all $(x,t)\in\Omega_T$ and $\epsilon_1\in (0,\tfrac{t}{2})$. 
Thus, via \eqref{3.28*}, \eqref{3.30*}  and letting $R\to\infty$, the differential inequality \eqref{Lw integral}-\eqref{Lw integral2} reduces to
\begin{align}
	\label{3.27-} 0\geq -\left.\int_{{\Omega}}\Gamma^*(y,s;x,t) w(y,s) dy\right|_{s=\e_1}^{t-\e_2}+\intint\Gamma^*\intargument (cw+dJw)(y,s)dyds.
\end{align}
Via Lemma \ref{thmforgammastar}, letting $\e_1,\e_2\to0^+$ in \eqref{3.27-} yields
\renewcommand{\intint}{\int_0^t\int_{{\Omega}}}
\begin{align}\label{endofbegining}
	 w(x,t)\geq \int_{{\Omega}}\Gamma^*(y,0;x,t)w(y,0)dy+\int_{\Omega_t} \Gamma^*\intargument (cw+dJw)(y,s)dyds
\end{align}
for all $(x,t)\in\Omega_T$. Via \eqref{condition on w at boundary}, the first integral in \eqref{endofbegining} is non-negative and thus we have
\renewcommand{\intargument}{(y,s;\xi,t)}
\renewcommand{\intint}{\int_{0}^{t}\int_{{\Omega}}}
\renewcommand{\newintint}{\int_{\Omega_t}}
\renewcommand{\Gammastar}{\Gamma^*(y,s;x,t)}
\begin{align}
\label{eqn2ofpositiveargument}	-w(x,t)\leq\newintint\Gammastar c(y,s)(-w)(y,s)dyds+	\newintint\Gammastar d(y,s)J(-w)(y,s)dyds
\end{align}
for all $(x,t)\in\Omega_T$. 
We define $\psi:\overline\Omega_T\to\RR$ to be
\begin{align}
	\label{defofpsi} \psi=-w\qquad\text{on }\overline\Omega_T,
\end{align}
and additionally, define $\psi_\infty^+:[0,T]\to\RR$ given by
\begin{align*}
\psi_\infty^+(t)=\sup_{y\in\Omega}\{\max\{\psi (y,t),0\}\}\qquad\forall t\in[0,T].
\end{align*}
We note that via \cite[Chapter 7]{Meyer2015}, $\psi_\infty^+\in L^1([0,T])$. 
Thus, via \eqref{bound on gammastar}, \eqref{3.16-}, \eqref{3.19-} and \eqref{defofpsi}, inequality \eqref{eqn2ofpositiveargument} becomes
\begin{align}
	\notag
	\psi(x,t)&\leq\newintint\Gammastar c(y,s)\max\{\psi(y,s),0\}dyds+	\newintint\Gammastar d(y,s)(J\max\{\psi,0\})(y,s)dyds
	\\ 
	\notag
	&\leq \newintint\Gammastar\left(\norm{c}_\infty\psi_\infty^+(s)+\norm{d}_\infty\int_\Omega\phi(z-y)\psi^+_\infty(s)dz\right)dyds
	\\ 
	\notag	
	&\leq \kappa\left(\norm{c}_\infty+\norm{d}_\infty\norm{\phi}_1\right)\newintint\frac{\psi^+_\infty(s)}{(t-s)^{n/2}}\exp\left(-\frac{\lambda|y-x|^2}{4(t-s)}\right)dyds
	\\
	\label{penultimateinequalitycomparisonthm} 
	&\leq \kappa\left(\norm{c}_\infty+\norm{d}_\infty\norm{\phi}_1\right)\left(\frac{2\sqrt{\pi}}{\sqrt{\lambda}}\right)^n\int_0^t\psi_\infty^+(s)ds
	\\
	\label{eqn3ofpositivityargoument} 
	&=C\int_0^t\psi_\infty^+(s)ds
\end{align}
where \eqref{penultimateinequalitycomparisonthm} follows after the Fubini-Tonelli Theorem and a change of variables. Since the right hand side of \eqref{eqn3ofpositivityargoument} is independent of $x\in\Omega$ and non-negative, it follows that
\begin{align}
	\psi^+_\infty(t)\leq C\int_0^t\psi^+_\infty(s)ds\qquad\text{on }[0,T].
\end{align}
 Via \eqref{condition on w at boundary} and \eqref{defofpsi} it also follows that
 $$\psi^+_\infty(0)=0.$$
 Therefore, via the Bellman-Gr\"onwall inequality \cite[Proposition 5.6]{Meyer2015}, we have that
\begin{align}
	\label{3.37} \psi^+_\infty\equiv0\qquad\text{on }[0,T].
\end{align}
Thus, from \eqref{defofpsi} and \eqref{3.37},   $w\geq0$ on $\overline\Omega_T$ and hence, $\overline u\geq\underline u$ on $\overline\Omega_T$, as required.
\end{proof}

We now provide an application of the comparison principle stated above.

\begin{remark}\label{rmrk4.5}
	Consider the initial-boundary value problem \eqref{exeq1}, \eqref{exeq2*}, \eqref{exeq3} and \eqref{exeq4} with $f:\RR^2\to\RR$ given by
	\begin{equation}
	    \label{rmrk4.5.0} f(u,Ju)=\max\{Ju(1-u),0\}\quad\forall(u,Ju)\in\RR^2,
	\end{equation}
and $u_0\in\Ccal^2(\RR)\cap L^\infty(\RR)$, with $0\leq u_0\leq1$ on $\RR$. 
Note that $f$ is locally Lipschitz continuous and therefore, via \cite[Chapter 9, Theorem 2.10]{Volpert2011a}, there exists a unique solution $u\in\Ccal^{2,1}(\RR\times[0,T])\cap L^\infty(\RR\times[0,T])$, for a sufficiently small $T>0$, to this initial-boundary value problem.
Since
	\begin{equation}
		\label{rmrk4.5.1}	\dt u-D\dxx u=f(u,Ju)\geq 0\on\Omega_T,
	\end{equation}
it follows from the minimum principle for the heat equation that 
	\begin{equation}
		\label{rmrk4.5.2} u\geq 0\on\overline\Omega_T.
	\end{equation}
Now, for any $T>0$, define $\underline u=u$ on $\overline\Omega_T$, $\overline u\equiv1$ on $\overline\Omega_T$ and $g:\RR\to\RR$ to be
	$$g(v)=\norm{u}_\infty\max\{1-v,0\}\quad\forall v\in \RR.$$
	Then:
	\begin{align}
		\label{rmrk4.5.3}	&\dt\underline u-D\dxx\underline u=\max\{J\underline u(1-\underline u),0\}\leq g(\underline u)\on\Omega_T;\\
		\label{rmrk4.5.4}	&\dt\overline u-D\dxx\overline u=0\geq g(\overline u)\on\Omega_T,
	\end{align}
	and
	\begin{align}
		\label{rmrk4.5.5}	\underline u\leq\overline u \on\partial\Omega_T.
	\end{align}
Thus, on recalling \eqref{rmrk4.5.2}, via \eqref{rmrk4.5.3}-\eqref{rmrk4.5.5}, it follows from Theorem \ref{Comparison Principle} (or Theorem \ref{nonlinear comparison thm}), with $\phi \equiv 0$ and $f=g(u)$, that  
\begin{equation}
	\label{rmrk4.5.6} 0\leq  u\leq 1\on \overline\Omega_T. 
\end{equation}
This a priori bound on the solution of the initial-boundary value problem, allows repeated application of the aforementioned local existence result, which acquires the global solution $u:\overline\Omega_\infty\to\RR$ to the initial-boundary value problem, for each $u_0\in C^2(\mathbb{R})\cap L^\infty (\mathbb{R})$. 
Since solutions to the initial-boundary value problem are unique and bounded between $0$ and $1$,  $f$ in \eqref{rmrk4.5.0} can be replaced by $f(u,Ju)=Ju(1-u)$ for all $(u,Ju)\in\RR^2$. 
Notably, $f\in\Ccal^1(\RR^2)$ and $\partial_{Ju}f\geq 0$ on $[0,1]^2$.

Now, consider two cases of the the initial-boundary value problem, with respective initial data $u_0^1,u_0^2:\RR\to\RR$, that satisfy
$$u_0^1\leq u_0^2\on \RR.$$
Then, we can apply Theorem \ref{Comparison Principle} (or Theorem \ref{nonlinear comparison thm} if $\phi$ also satisfies \eqref{JMFE1}), to conclude that the corresponding solutions $u^i:\overline\Omega_\infty\to\RR$, for $i=1,2$, to the initial-boundary value problem satisfy
$$u^1\leq u^2\on\overline\Omega_\infty.$$
\end{remark}

\section{Conclusion}\label{Conclusion}
It is widely known that solutions to boundary value problems for second order linear parabolic partial differential equations in unbounded domains, need not be unique if growth restrictions (as $|x|\to\infty$) are not imposed. 
Indeed, even for the homogeneous heat equation in $\RR^n\times[0,T]$, uniqueness fails if one does not assume a boundedness condition similar to $\int_0^T\int_{\RR^n}|u(x,t)| \exp \left(-\mu|x|^{2}\right)dxdt<\infty$, for constant $\mu >0$ (see \cite[p.29-31]{Friedman2008} and \cite{Hayne1978}). 
In the non-local setting we examine, one may assume alternative growth/decay conditions on $u$, to $u\in L^\infty(\overline{\Omega}_T)$ assumed here. 
The conclusions of Proposition \ref{positivenessresult} and Theorems \ref{nonlinear comparison thm} and \ref{Comparison Principle} can be established, provided that: conditions  on $P$ and $f$ (in the aforementioned statements) are suitably augmented; $Ju$ remains well-defined and suitably bounded as $|x|\to\infty$; and, suitable auxiliary functions exist (see \cite[Section 3]{Meyer2014} for details).
 
For the initial-boundary value problem for the heat equation in $\RR^n\times[0,T]$, sharp growth conditions on $u$ as $|x|\to\infty$, for which uniqueness/non-uniqueness of solutions applies, are known (see \cite{Hayne1978} and the references the therein). 
Such non-uniqueness results highlight conditions, which if violated, preclude maximum/minimum principles and comparison principles.
Consequently, we note here that, analogous examples to highlight the limitations of maximum/minimum principles for nonlinear non-local integro-differential operators, containing a $Ju$ term (as in Proposition \ref{positivenessresult}), complimenting limitations known in the local setting (as those discussed in \cite[Section 3]{Meyer2014}), would provide further insight into comparison theory for nonlinear non-local integro-differential operators.
 
In Section \ref{Section4} we assumed that $\Omega=\RR^n$.
However, we may also consider $\Omega\not=\mathbb{R}^n$ which is an unbounded domain, with $\partial\Omega$ sufficiently smooth. 
For such domains, a theorem similar to Theorem \ref{Comparison Principle} can be established via the same arguments. 
This statement applies, under the proviso that, compatibility conditions for $w=(\overline u-\underline u)\theta$ and $\Gamma^*$, in a neighbourhood of $\partial\Omega\times[0,T]$, are specified so that the application of the divergence theorem in \eqref{:(}, yields
 \[
 \int_{\varepsilon_{1}}^{t-\varepsilon_{2}} \int_{S} F \cdot \hat n dS dt \geq0 ,
\]
for all $(x,t)\in \Omega_T$, with $S=\partial \Omega \cap B_{x}^R$.
Here, the $i-$th component of $F:\Omega_T\to\mathbb{R}^n$ is given by
$$
\sum_{j=1}^{n}\left[\Gamma^{*} a_{i j}\left(\partial_{y_{j}} w\right)-w a_{i j}\left(\partial_{y_{j}} \Gamma^{*}\right)-w \Gamma^{*}\left(\partial_{y_{j}} a_{i j}\right)\right]+\left(b_{i} w \Gamma^{*}\right) \quad \text{ on }\Omega_T,
$$
for $i=1,\dots, n$, and $\hat n$ is the outward normal vector to $S=\partial\Omega \cap B_x^R$. 
Moreover, the error term arising from the integral over $\partial\Omega \cap (B_x^{R+1}\setminus B_x^R)$ is required to decay sufficiently rapidly as $R\to \infty$.

A natural application for Theorems \ref{nonlinear comparison thm} and \ref{Comparison Principle} is to establish uniqueness for solutions of initial-boundary value problems.
However, as Remark \ref{remark 3.7} demonstrates, the solution to an initial-boundary value problem for a nonlinear non-local integro-differential equation, can be unique, without the operator satisfying a comparison principle.
Results concerning uniqueness, and continuous dependence with respect to initial data, are often established via the Bellman-Gr\"onwall inequality. 
This approach to establish uniqueness does not require a monotonicity condition, such as condition \eqref{thmass6}, but merely regularity conditions on $f$.

\subsection*{Acknowledgements}
Both authors would like to thank Dr Alexandra Tzella for her insightful feedback on an early version of the material in this article.


\bibliography{MyCollection}
\bibliographystyle{abbrv}

\end{document}